\newtheorem{theorem}{Theorem}
\newtheorem{lemma}[theorem]{Lemma}
\newtheorem{proposition}[theorem]{Proposition}
\newtheorem{definition}{Definition}
\newtheorem{remark}{Remark}
\newtheorem{example}{Example}
\newenvironment{proof}
  {\begin{trivlist}
  \item[\textit{\noindent\textsc{ Proof.}}]}
  {\hfill$\square$\end{trivlist}}
\begin{document}

\title{Generalized Linear Cellular Automata in Groups and Difference Galois Theory}
\author{David Bl\'azquez Sanz\footnote{Corresponding author}
\footnote{
Universidad Nacional de Colombia - Sede Medell\'in. 
Calle 59A No. 63 - 20.
Medell\'in - Antioquia - Colombia. Ph. (57)(4)4309877. Fax. (57)(4)4309000.
e-mail: dblazquezs@unal.edu.co
} 
\& Weimar Mu\~noz\footnote{
e-mail: weimar.munoz@unimilitar.edu.co
}
}
\date{}

\maketitle

\begin{abstract}
Generalized non-autonomous linear celullar automata 
are systems of linear difference equations with many variables
that can be seen as convolution equations in a discrete group.
We study those systems from the stand point
of the Galois theory of difference equations and discrete Fourier transform.
\end{abstract}

\noindent {\bf Keywords:} Cellular automata, difference Galois theory, linear difference system, 
discrete Fourier transform, Cayley graph. 

\smallskip

\noindent {\bf MSC2000:} 12H10, 37B15, 39A05.
\section*{Introduction}
  
    Natural science gives us many examples of dynamical systems in which 
the state variables interact in an ordered and homogeneous way. This kind of
systems have been recently named \emph{coupled cell network systems}. They
admit discrete groups of symmetries, and this fact leads to 
some interesting results concerning synchronization and
bifurcation theory, see for instance \cite{Go_etal}.

	In this article we study certain class of coupled cell network systems.
Namely those whose underlying network can be seen as a group (truly, a torsor).
These are the so-called \emph{generalized cellular automata in groups} \cite{Ce_etal}.
Inside this class there is the subclass of systems in which 
the transition function is a linear function. These are called 
\emph{generalized linear cellular automata in groups}, and they
are the main focus of this article. 

  First section is devoted to the presentation of the objects and some generalities
on their spaces of solutions. In second section we study 
explicit formulas for the solutions 
obtained by means of discrete Fourier transformation. 
We explain in detail the case of finite groups, which is reduced 
by means of Peter-Weil theorem and then we give some results concerning 
the infinite case. Theorems \ref{theorem25} and \ref{theorem26}
clarify the structure of the solutions with finite support and the sequences of numbers
appearing therein. Theorem \ref{theorem29} details the sequences of numbers that may
appear in periodic solutions. Third section is devoted to
Galois theory. Theorem \ref{theorem35} interprets the Fourier transform,
in the finite case, as a reduction of the structural group of the linear
difference system. Then, in order to apply difference Galois
theory to cellular automata in infinite groups we have to introduce some kind
of extensions of infinite transcendent degree. Theorem \ref{theorem39}
explains the structure of such extensions, being their Galois groups 
pro-algebraic groups.

\section{Generalized cellular automata in Cayley graphs}

\subsection{Cayley graphs} The configuration space of a cellular automaton 
is the space of states -complex valued functions- of a network. This network
is a directed graph (digraph) with a group of symmetries acting free and
transitively on its nodes. This notion is captured by the definition of Cayley
graph.  

\begin{definition}
Let $G$ be a discrete group and $U = \{u_1,\ldots,u_n\}$ be a finite subset of $G$. The
cayley graph $\mathcal G = (G,U)$ is the digraph satisfying:
\begin{enumerate}
\item[(a)] The set of nodes of $\mathcal G$ is $G$.
\item[(b)] There is an arrow from $g_1$ to $g_2$ if and only if $g_1g_2^{-1}\in U$.
\end{enumerate} 
\end{definition}

\begin{figure}[htb]
\centerline{\includegraphics[height=6cm]{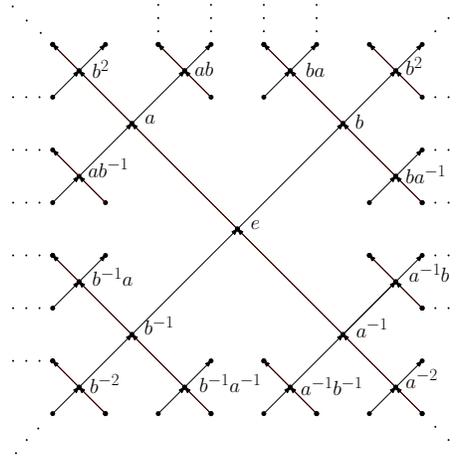}}
\caption{Cayley graph in a a free group with two generators $(\langle a, b \rangle, \{a, b\})$. 
}\label{fig1}
\end{figure}

\begin{remark} Some elementary facts about Cayley graphs:
\begin{itemize}
\item[(i)] The set of out-neighbours of $g\in G$ is $U^{-1}g$. 
\item[(ii)] The set of in-neighbours of $g\in G$  is $gU$.
\item[(iii)] $\mathcal G$ is an homogeneous graph of in-degree and out-degree equal to $n$, the cardinal of $U$.
\item[(iv)] By composition on the right side $G$ acts as a group of automorphisms of $\mathcal G$.
\item[(v)] By composition on the left side the centralizer of $U$ acts a as group of automorphisms of $\mathcal G$.
\item[(vi)] A graph is isomorphic to some Cayley graph if and only if it admits a group
of automorphisms acting transitively and freely on the set of nodes.
\end{itemize}
\end{remark}

We denote by $\mathcal C(G,\mathbf C)$ the  set of functions from $G$ to the field
of complex numbers, and by $\mathcal C_c(G,\mathbf C)$ the set
of complex functions with finite support. 
Note that $\mathcal C_c(G,\mathbf C)$ is isomorphic to the enveloping 
algebra $\mathbf C[G]$, and coincides with $\mathcal C(G, \mathbf C)$ for finite $G$. 
For $s\in\mathcal C(G,\mathbf C)$ and $g\in G$ we will denote by $s_g$ or $s(g)$ 
the value of $s$ at $g$, and we will term it the \emph{state of $s$ at $g$}.

The group $G$ acts in $\mathcal C(G, \mathbf C)$ naturally on the left and right side. In order to 
keep coherence with the classical convolution operators we will use the following notation:
\begin{eqnarray}
 (g\star s)_h &\colon =& s_{g^{-1}h}.  \label{G_action_L}\\
 (s\star g)_h &\colon =& s_{hg^{-1}}.  \label{G_action_R}
\end{eqnarray}

From now on let us fix a Cayley graph $\mathcal G = (G,U)$ with $U = \{u_1,\ldots,u_n\}$.

\begin{definition}\label{def_celA}
A generalized cellular automaton in $\mathcal G$ is a map $\varphi$ from $\mathcal C(G,\mathbf C)$ to itself satisfying:
\begin{itemize}
\item[(a)] It commutes with the right action of $G$, $\varphi(s\star g) = \varphi(s)\star g$.
\item[(b)] The state $\varphi(s)_g$ depends only of the states of $s$ at the out-neighbours of $g$,
$s_{u_1^{-1}g},\ldots,s_{u_n^{-1}g}$.
\end{itemize}
\end{definition}

\begin{remark}
It is easy to see that definition \ref{def_celA} 
coincides with the classical notion of cellular automata when
the group $G$ is a regular lattice $\mathbf Z^d$ or a quotient of this. 
In such case, the dimension
of the automaton is $d$ and its rank is the radius of the set $U$. 
\end{remark}

\begin{figure}[htb]
\centerline{\includegraphics[height=1cm]{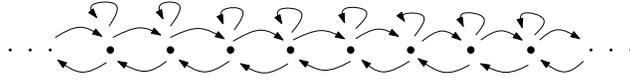}}
\caption{Cayley graph $(\mathbf Z, \{-1, 0, 1\})$ in a the additive group of integer numbers. Generalized 
cellular automata in this graph with states in $\mathbf C$ naturally correspond to classical rank one, one-dimensional
cellular automata with states in $\mathbf C$.
}\label{fig2}
\end{figure}

From conditions (a) and (b) it follows that $\varphi$ is determined by 
a map $f\colon {\mathbf C}^n\to {\mathbf C}$ satisfying:
$$\varphi(s)_g = f(s_{u_1^{-1}g},\ldots,s_{u_n^{-1}g}).$$
This map $f$ is called the \emph{transition map} of $\varphi$.
The automaton $\varphi$ is said to be polynomial if the transition map $f$
is a polynomial map, and it is said to be \emph{linear}
if the transition map $f$ is a linear map. 

From now on we will only consider cellular automata with transition functions
$f$ satisfying $f(0,\ldots,0) = 0$. This condition implies that the map 
$\varphi$ preserves the subspace $\mathcal C_c(G,\mathbf C)$ and thus it makes sense
to study the evolution of the automaton with finite support states.

\begin{figure}[htb]
\centerline{\includegraphics[height=5cm]{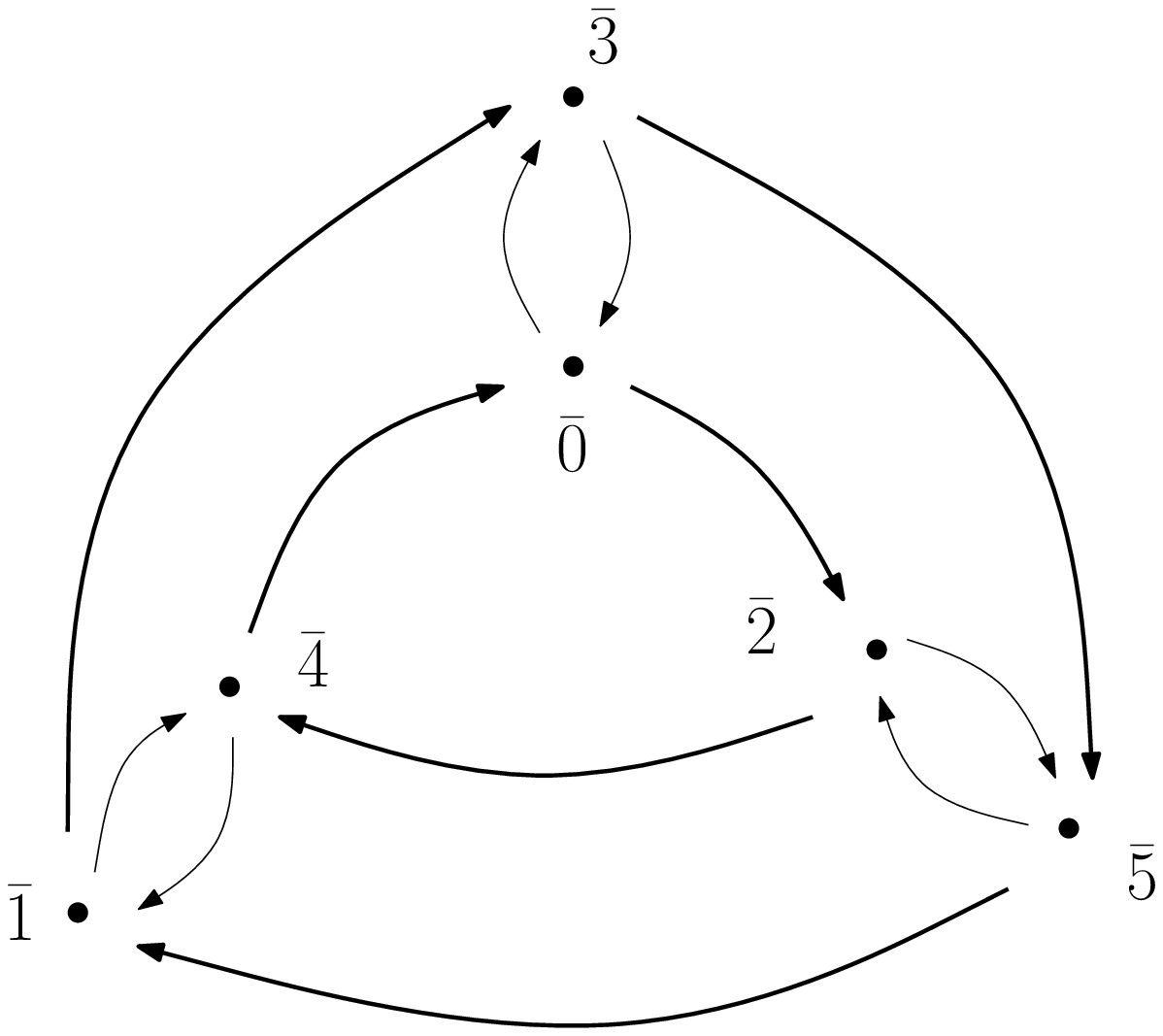}\hspace{1cm}\includegraphics[height=5cm]{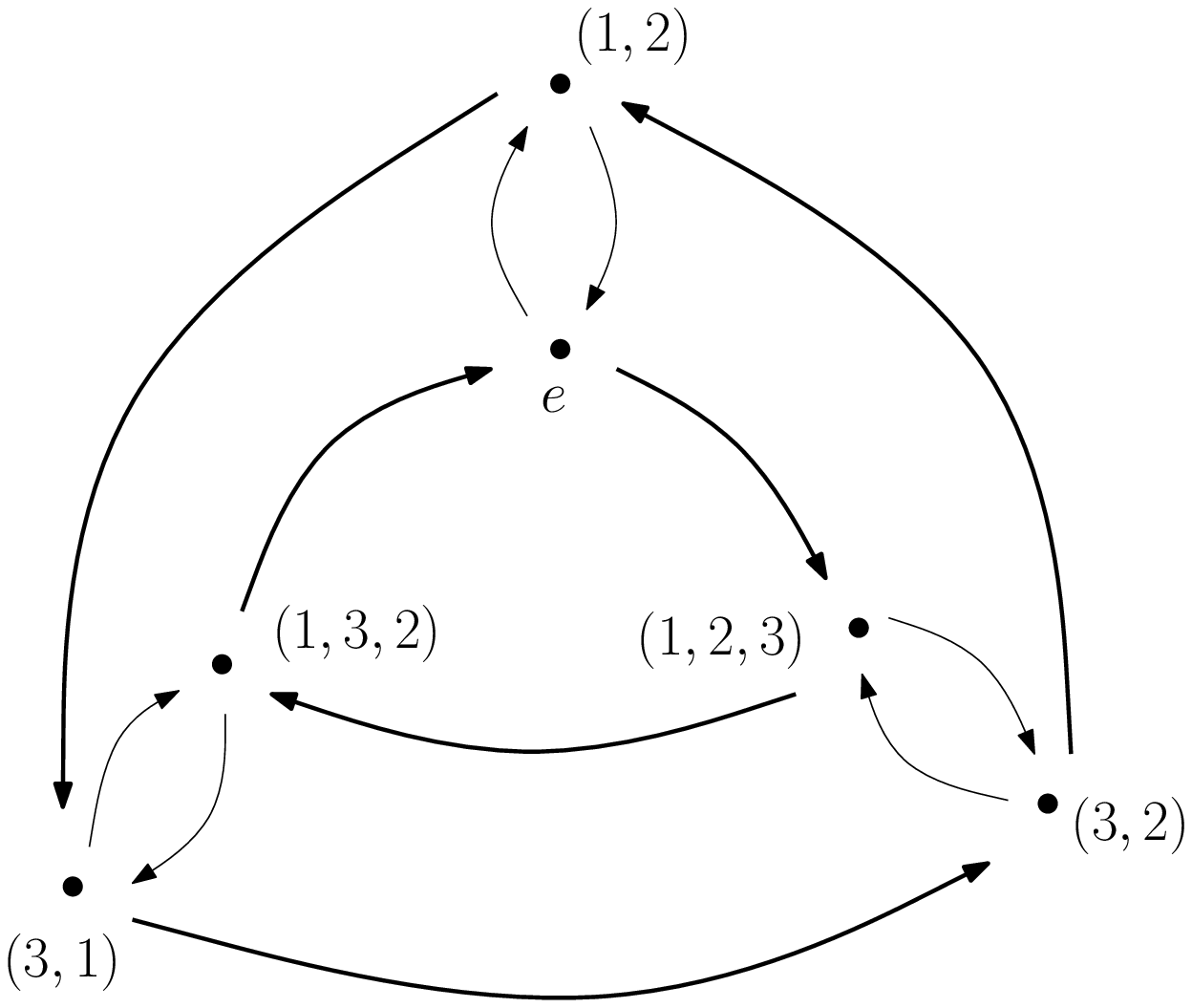}}
\caption{Left, Cayley graph in a cyclic group $(\mathbf Z_6, \{\bar 4, \bar 3\})$. 
Right, Cayley graph in the symmetric group $(S_3,\{(1,3,2),(1,2)\})$. 
}\label{fig3}
\end{figure}

We are interested in determining the dynamics of the cellular automata. 
This is, to give a description
of the sequences of states $(s^{(t)})_{t\in\mathbf Z_+}$ defined by the recurrence:
\begin{equation}\label{aut_caut}
s^{(t+1)} = \varphi(s^{(t)}).
\end{equation}

\subsection{Non-autonomous generalized cellular automata} The difference
equation \eqref{aut_caut} is autonomous in the following sense: the
cellular automata $\varphi$ does not depend explicitly on the \emph{step
variable} $t$. The theory shown in this article allows us to consider
a broader class of recurrences of the form: 
\begin{equation}\label{naut_caut}
s^{(t+1)} = \varphi(s^{(t)},t).
\end{equation}

Let $\rm Seq(\mathbf C)$ be the ring of sequences in $\mathbf C$.
This ring has a natural structure of $\sigma$-ring - also called difference ring - 
with the shift operator $\sigma(x^{(t)}) = x^{(t+1)}$. 
The space $\mathcal C(G,\rm Seq(\mathbf C))$ is the
space of bivariate discrete functions from $G\times\mathbf Z_+$ to $\mathbf C$.
Within this space we denote by $\mathcal C_c(G,\rm Seq(\mathbf C))$ to the
space of functions whose support is \emph{finite for any given $t$}.
The shift operator $\sigma$ naturally extends to $\mathcal C(G,\rm Seq(\mathbf C))$
and $\mathcal C_c(G,\rm Seq(\mathbf C))$ by letting $\sigma(s)_g^{(t)} = s_g^{(t+1)}$.
It is clear that a sequence of states $s \in\mathcal C(G,\rm Seq(\mathbf C))$ is 
solution of the difference equation \eqref{aut_caut} if an only
if it satisfies the functional equation:
\begin{equation}\label{sigma_phi}
\sigma(s) = \varphi(s).
\end{equation}

A first definition of a non-autonomous generalized cellular automata
would be an arbitrary sequence of transition laws $f(\bullet,t)$, defining
automata $\varphi(\bullet,t)$ and functional equation \eqref{naut_caut}.
However, this definition is too broad, since we are allowing any kind of
sequence of transition laws. If we want to explore the link between
the sequence $f(\bullet,t)$ of transition laws and the solutions
of \eqref{naut_caut} we should not allow arbitrary sequences.

In order to see this link, 
we fix ${\mathbf k}$, a $\sigma$-subring of  ${\rm Seq}(\mathbf C)$
whose field of constants is ${\mathbf k}^\sigma = \mathbf C$. The $\sigma$-ring
$\mathbf k$ is the ring of allowed sequences in the definition of our
cellular automata. 


\begin{definition}
Let $f\colon {\mathbf k}^n\to {\mathbf k}$ be polynomial map satisfying $f(0,\dots,0) = 0$, and let: 
$$\varphi\colon\mathcal C(G,{\mathbf k})\to\mathcal C(G,{\mathbf k}),$$
defined by:
$$\varphi(s)_g = f(s_{u_1^{-1}g},\ldots,s_{u_n^{-1}g}).$$
We say that $\varphi$ is a non-autonomous generalized polynomial cellular automaton  in $\mathcal G$ with
coefficients in ${\mathbf k}$.
\end{definition}

Note that, for ${\mathbf k} = \mathbf C$ we obtain autonomous 
polynomial cellular automata. Note also that
the transition map $f$ naturally extends to any $\sigma$-extension of ${\mathbf k}$, in particular
to $f\colon {\rm Seq}(\mathbf C)^n\to {\rm Seq}(\mathbf C)$. A solution of 
the cellular automata $\varphi$ is $s\in \mathcal C(G,{\rm Seq}(\mathbf C))$ 
satisfying \eqref{sigma_phi}. To study the automaton $\varphi$ 
from a difference algebra standpoint
is to study the intermediate $\sigma$-extensions ${\mathbf k}\subseteq R \subseteq {\rm Seq}(\mathbf C)$ 
such that the automaton has some solutions $s\in \mathcal C(G,R)$. Thus, we may 
know the kind of sequences giving rise to solutions of the automaton. There is at least
three $\sigma$-rings canonically attached to the automata $s$:
\begin{itemize}
\item[(a)] The $\sigma$-ring ${\mathbf k}\subseteq L$ spanned by 
all the solutions of the automaton. 
It is the smallest $\sigma$-ring extension $L$ of ${\mathbf k}$ in 
${\rm Seq}(\mathbf C))$ such
that all the solutions of the automaton are in $\mathcal C(G,L)$
\item[(b)] The $\sigma$-ring ${\mathbf k}\subseteq L^{c}$ spanned by all 
the solutions of finite support.
It is the smallest $\sigma$-ring extension $L^c$ of ${\mathbf k}$ in $L$ such 
that all the solutions
in $\mathcal C_c(G,{\rm Seq}(\mathbf C))$ are in $\mathcal C_c(G,L^c)$.
\item[(c)] Given a subgroup $H\subset G$, a sequence of states 
$s\in\mathcal C(G,{\rm Seq}(\mathbf C))$ is called $H$-periodic if $s_{g} = s_{gh}$ for 
all $g\in G$ and $h\in H$. We consider the $\sigma$-ring $L^{H-\rm per}$ spanned by all the
$H$-periodic solutions, and the $\sigma$-ring $L^{\rm per}$ spanned by all the 
$H$-periodic solutions for all normal subgroups of finite index of $G$.
\end{itemize}

It is clear that $L_c\subseteq L$ and $L^{\rm per}\subseteq L$. For finite group $G$ these three
$\sigma$-extensions are the same. For infinite groups $L$ tends to be the whole ring of sequences.
The following result holds.

\begin{proposition}
Let us consider the topology of the coincidence along compact (and thus finite) subsets of
$\mathbf Z_+$ in ${\rm Seq}(\mathbf C)$. 
The following statements are satisfied:
\begin{itemize}
\item[(a)] The ring $L^c$ is dense in $L$. 
\item[(b)] The ring $L^{\rm per}$ is dense in $L$ if and only if $G$ is a residually finite group.
\end{itemize}
\end{proposition}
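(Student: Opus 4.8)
The plan is to recast density in a form suited to the recurrence. Fix a finite window $K\subset\mathbf Z_+$ and put $T=\max K$, and let $\pi_K\colon{\rm Seq}(\mathbf C)\to\mathbf C^{K}$ be restriction to the times in $K$; this is a ring homomorphism whose fibres are exactly the basic neighbourhoods of the given topology, so \emph{$L^{c}$ (resp. $L^{\rm per}$) is dense in $L$ if and only if $\pi_K(L^{c})=\pi_K(L)$ (resp. $\pi_K(L^{\rm per})=\pi_K(L)$) for every finite $K$}. The engine common to both parts is a \emph{finite propagation} lemma. Iterating $\sigma(s)=\varphi(s)$ shows that for any solution $s$ and any node $g$ the truncation $(s_g^{(0)},\dots,s_g^{(T)})$ is a polynomial expression, with coefficients drawn from $\mathbf k$, in the \emph{initial} values $s_h^{(0)}$ with $h$ ranging over the finite backward light cone $N_T(g)=\{u_{i_1}^{-1}\cdots u_{i_t}^{-1}g : 0\le t\le T\}$. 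Hence $\pi_K(s_g)$ depends only on the restriction of $s^{(0)}$ to $N_T(g)$. Since any $x\in L$ is a $\sigma$-polynomial over $\mathbf k$ in finitely many components $s_{i,g_i}$ of finitely many solutions $s_i$, and the shifts $\sigma^{j}$ occurring enlarge the window by a bounded amount $J$, the whole of $\pi_K(x)$ is governed by the initial data of the $s_i$ on the finite set $F=\bigcup_i N_{T+J}(g_i)$. Both parts will be proved by replacing each $s_i$ by a solution of the required special type having the \emph{same} initial datum on $F$, and then feeding it through the same $\sigma$-polynomial.

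For part (a) the substitute is produced by truncation: replace each $s_i^{(0)}$ by its restriction to $F$, extended by $0$ elsewhere. This datum has finite support, so the standing hypothesis $f(0,\dots,0)=0$ (which makes $\varphi$ preserve $\mathcal C_c(G,\cdot)$) guarantees the resulting solution lies in $\mathcal C_c(G,{\rm Seq}(\mathbf C))$; its components therefore lie in $L^{c}$, and it agrees with $s_i$ on $F$, hence has identical light-cone data and identical $\pi_K$. This gives $\pi_K(L^{c})=\pi_K(L)$ for every $K$, i.e. $L^{c}$ is dense in $L$.

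For the ``if'' direction of part (b) the substitute is periodic rather than truncated, and residual finiteness is exactly what lets us build it. Given the finite set $F$, intersect the finitely many finite-index normal subgroups separating the pairs of $F$ to obtain one normal subgroup $H$ of finite index that separates $F$; then $F$ injects into $G/H$, so each $s_i^{(0)}$ can be replaced by the $H$-periodic datum carrying the value $s_i^{(0)}(p)$ on the class of each $p\in F$ and $0$ on the remaining classes. Because $\varphi$ commutes with the right action of $G$ (condition (a) of Definition \ref{def_celA}), an $H$-periodic initial datum yields an $H$-periodic solution, whose components lie in $L^{\rm per}$ and which agrees with $s_i$ on $F$, hence shares its $\pi_K$. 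Thus $\pi_K(L^{\rm per})=\pi_K(L)$ and $L^{\rm per}$ is dense in $L$.

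The substance of the statement, and the step I expect to be the main obstacle, is the converse in (b): if $G$ is \emph{not} residually finite then density fails. The mechanism is dual to the separation above. Choose $g\neq e$ lying in every finite-index normal subgroup; then for all $q$ the pair $\{q,qg\}$ is merged in every finite quotient, so every $H$-periodic datum is forced to satisfy $c(q)=c(qg)$. Assuming, as one should here, that $U$ generates $G$ so that the light cones eventually reach the whole group, one can locate a node $g'$ and a window $K$ whose cone $N_T(g')$ contains such a collapsing pair; in the extreme model of a pure translation automaton the \emph{entire} backward orbit collapses and every periodic solution becomes constant in time. Consequently the periodic generators $\pi_K(w_{g'})$ are confined to a proper sub-locus $W_K\subsetneq\mathbf C^{K}$ cut out by these forced coincidences across \emph{time}, while a genuine solution with non-degenerate time-profile escapes it. The delicate point is that $L^{\rm per}$ is a whole $\sigma$-ring: it is not enough to see the generators confined to $W_K$, one must certify that the offending element is not recovered through products, sums and the coefficient ring $\mathbf k$. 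I would handle this by isolating a \emph{time-difference} invariant, a fixed finite linear combination of the $\pi_t$ that vanishes on $W_K$, and by reducing to the linearisation of $\varphi$ and to the minimal coefficient ring generated by $f$, where $W_K$ is itself a subalgebra containing $\pi_K(\mathbf k)$ and the separation is exact, so that $\pi_K(L^{\rm per})\subseteq W_K\subsetneq\pi_K(L)$. Tracking this invariant across the generated $\sigma$-ring, rather than merely across the linear space of solutions, is the crux of the argument.
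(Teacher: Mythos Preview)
Your treatment of part (a) and of the ``if'' direction of (b) is correct and follows the same line as the paper: approximate the initial datum on the relevant finite light cone and invoke finite propagation. The paper's proof is the two-sentence version of exactly this; your write-up is simply more explicit about the $\pi_K$ reformulation and about why approximating generators suffices for the whole $\sigma$-ring.

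For the converse in (b) you have correctly located the crux, and here you should know that the paper does \emph{not} supply more than you do: its entire argument is the single clause ``any initial data can be approximated by periodic initial data if and only if $G$ is residually finite.'' In other words, the paper treats the equivalence at the level of initial data as self-evidently equivalent to density of $L^{\rm per}$ in $L$, and does not address the ring-versus-linear-span issue you raise. Your concern is legitimate: without further hypotheses the ``only if'' direction is not true for every automaton (take $\varphi=0$, where $L=L^{c}=L^{\rm per}=\mathbf k$ regardless of $G$), so a genericity or nondegeneracy assumption on $\varphi$, and your assumption that $U$ generates $G$, are indeed needed. Your proposed mechanism---a time-difference invariant vanishing on the constrained locus $W_K$---is a reasonable line of attack, but as you say it is not yet a proof; the closure of $W_K$ under products and under adjunction of $\mathbf k$ must be checked, and this will require committing to a specific class of automata. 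So: your proposal matches the paper where the paper has content, and goes further than the paper precisely where the paper is silent, but neither completes the converse in full generality.
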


\begin{proof}
(a) The ring $L$ is spanned by the sequences $x^{(t)} = s_e^{(t)}$ 
for solutions $s$ of \eqref{naut_caut} where 
$s^{(0)}\in\mathcal C(G,\mathbf C)$ variates among all the possible initial
data. Any initial data $s^{(0)}$ can be approximated by finite support initial data.
(b) Same consideration, taking into account that any initial data can be approximated
by periodic initial data if and only if $G$ is residually finite.
\end{proof}

\subsection{Linear generalized non-autonomous cellular automata}
The classical convolution operator for functions from $G$ to  
$\mathbf C$ naturally extends to functions in the group with values in any subring $R\subseteq {\rm Seq}(\mathbf C)$.
$$\star\colon \mathcal C(G,R) \times_c \mathcal C(G,R)\to  \mathcal C(G,R),\quad
 (\alpha\star \beta)_g = \sum_{h\in G} \alpha_{h}\beta_{h^{-1}g}.$$
Here, the set  $\mathcal C_c(G,R) \times_c \mathcal C(G,R) \subset \mathcal C(G,R) \times_c \mathcal C(G,R)$
is just the maximal subset of $\mathcal C_c(G,R) \times \mathcal C(G,R)$ 
in which the convolution is defined. It is clear that
$\alpha$ or $\beta$ with finite support is a sufficient condition for the existence of the
convolution, and that the convolution is a $R$-linear associative operation whenever it is 
defined. 

Let us denote by $\delta_g$ the Dirac delta function at the point $g\in G$. We
have the following formulae relating the convolution and the action of $G$:
\begin{eqnarray}
\delta_g \star \beta = g \star \beta \\
\alpha \star \delta_g = \alpha \star g.
\end{eqnarray}

\begin{definition}
A \emph{non-autonomous generalized linear cellular automaton in the Cayley graph 
$\mathcal G$ with coefficients in ${\mathbf k}$} is a 
non-autonomous generalized polynomial cellular automata whose transition function is a linear homogeneous polynomial:
$$\varphi(s)_g = a_1s_{u_1^{-1}g} + \ldots + a_ns_{u_n^{-1}g}, \quad a_i\in\mathbf k.$$ 
\end{definition}
Therefore the non-autonomous generalized linear cellular automaton difference equation:
$$s_{g}^{(t+1)} = a_1^{(t)}s_{u_1^{-1}g} + \ldots + a_n^{(t)}s_{u_n^{-1}g}^{(t)}$$
can be seen as a difference convolution equation:
\begin{equation}\label{conv_eq}
\sigma(s) = \alpha \star s,
\end{equation}
where $\alpha\in \mathcal C_c(G,{\mathbf k})$ is the function from $G$ to $\mathbf k$
with support in $U$ that maps each $u_j\in U$ to the sequence $a_j\in {\mathbf k}$.

\subsection{Three solution spaces associated to a cellular automata} 
From now on, let us fix $\alpha\in \mathcal C_c(G,{\mathbf k})$ and consider the convolution equation 
\eqref{conv_eq}. For any intermediate $\sigma$-extension ${\mathbf k}\subseteq R \subseteq L$ let 
us denote ${\rm Sol}(\alpha,R)\subset \mathcal C(G,R)$ the space
of solutions of in $\mathcal C(G,R)$. It is clear that ${\rm Sol}(\alpha,R)$ is a 
vector space over $\mathbf C$.

By the associativity of the convolution, 
if $s$ is a solution of \eqref{conv_eq} and $g\in G$ then 
$s\star g = s \star \delta_{g}$ is also a solution. Thus,
\emph{the spaces ${\rm Sol}(\alpha,R)$ are right $G$-modules}. As before, we have some distinguished
subspaces of the total space of solutions.

\begin{itemize}
\item[(a)] ${\rm Sol}(\alpha,R)$ is the space of all solutions of \eqref{conv_eq} in $\mathcal C(G,R)$.
We have that ${\rm Sol}(\alpha,{\rm Seq}(\mathbf C)) = {\rm Sol}(\alpha,L)$.
\item[(b)] ${\rm Sol_c}(\alpha, R)$ is the space of all finite support solutions of \eqref{conv_eq}
in $\mathcal C_c(G,R)$. We have ${\rm Sol}_c(\alpha,{\rm Seq}(\mathbf C)) = {\rm Sol}_c(\alpha,L^c)$.
\item[(c)] Given $H\subset G$ let us denote  ${\rm Sol_{H-{\rm per}}}(\alpha, R)$ the space of al $H$-periodic
solutions, and ${\rm Sol_{\rm per}}(\alpha, R)$ the space of periodic $H$-solutions for 
all normal subgroups $H$ of finite index. We have ${\rm Sol}_{H-{\rm per}}(\alpha,{\rm Seq}(\mathbf C)) = {\rm Sol}_{H-{\rm per}}(\alpha,L^{H-{\rm per}})$
and ${\rm Sol}_{\rm per}(\alpha,{\rm Seq}(\mathbf C)) = {\rm Sol}_{\rm per}(\alpha,L^{\rm per})$.
\end{itemize}

\begin{definition}
We call fundamental solution $\phi$ of \eqref{conv_eq} to the only solution satisfying:
$$\phi^{(0)}_e = 1, \quad \phi^{(0)}_g = 0, \mbox{ for all } g\neq e.$$
Let $H\subset G$ be a subgroup, we call fundamental $H$-periodic solution $\phi^H$ of
\eqref{conv_eq} to the only solution satisfying:
$$\left(\phi^{H}\right)^{(0)}_h = 1, \mbox{ for all } h\in H,\quad 
\left(\phi^{H}\right)^{(0)}_g  = 0, \mbox{ for all } g \not\in H.$$
\end{definition}

Note that 
$\phi^{H} = \sum_{h\in H} \phi\star h,$
where this infinite sum of functions in $G\times \mathbf Z^+$ becomes a finite sum
after evaluation in $(g,t)\in G\times \mathbf Z^+$.

\subsection{Periodic solutions and quotient cellular automata} 

Let $H\lhd G$ be a normal subgroup.
Here we discuss how
the $H$-periodic solutions of \eqref{conv_eq} can be seen as the solutions
of a convolution equation in the quotient $G/H$.
 Let us denote by $\pi_H\colon G\to G/H$, $g\mapsto [g]$, the quotient
map and $\pi_H^*$ the natural embedding:
$$\pi^*_H\colon \mathcal C(G/H,R)\to \mathcal C(G,R), \quad \pi_H^*(f)(g) = f([g]).$$
Let us consider the sub-ring $\mathcal C_H(G,R)$ consisting in functions with finite
supports along co-sets:
$$\mathcal C_H(G,R) = \{f\in \mathcal C(G,R) | \,\forall g\in G\,\forall t\in\mathbf Z_+ \,  
|{\rm supp}(f^{(t)}) \cap gH|< \infty \}.$$
It is clear that $\mathcal C_H(G,R) \supseteq \mathcal C_c(G,R)$, and this inclusion
is an equality if and only if $H$ is of finite index in $G$. We define the map,
$$r_H\colon \mathcal C_H(G,R)\to \mathcal C(G/H,R), (r_Hf)([g]) = \sum_{h\in H} f(gh).$$

\begin{lemma}\label{adjoint}
The maps $\pi_H^*$ and $r_H$ are adjoint with respect to convolution in the 
following sense: let $\alpha$ be in $\mathcal C_c(G,R)$ and $s\in \mathcal C(G/H,R)$, then
$$\alpha\star \pi_H^*s = \pi_H^* (r_H\alpha \star s).$$
\end{lemma}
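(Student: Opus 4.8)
The plan is to prove the identity by a direct computation, evaluating both functions in $\mathcal C(G,R)$ at an arbitrary node $g\in G$ and checking that the values agree. Because $\alpha$ lies in $\mathcal C_c(G,\mathbf C)$, it has finite support for every fixed step, so all the sums appearing below are finite; this guarantees that each convolution is defined and that every reindexing I perform is legitimate, with no convergence subtleties to worry about.

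First I would expand the left-hand side using the definition of the convolution together with the definition of $\pi_H^*$, obtaining
$$(\alpha\star \pi_H^* s)_g = \sum_{x\in G} \alpha_x\,(\pi_H^* s)_{x^{-1}g} = \sum_{x\in G} \alpha_x\, s_{[x^{-1}g]}.$$
The key step is then to partition the summation index set by right cosets, $G = \bigsqcup_{[y]\in G/H} yH$, writing each $x$ as $x = yh$ with $h\in H$. Here normality of $H$ is used so that $G/H$ is a group and $[x^{-1}g] = [x]^{-1}[g] = [y]^{-1}[g]$, which crucially does \emph{not} depend on $h$. This lets me factor the inner sum over $H$ out of the coefficient attached to $s$:
$$\sum_{x\in G}\alpha_x\, s_{[x^{-1}g]} = \sum_{[y]\in G/H}\Bigl(\sum_{h\in H}\alpha_{yh}\Bigr)\,s_{[y]^{-1}[g]} = \sum_{[y]\in G/H} (r_H\alpha)_{[y]}\, s_{[y]^{-1}[g]},$$
where the last equality is just the definition of $r_H$. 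Finally I would recognize the right-hand expression as the convolution on $G/H$ evaluated at $[g]$, namely $(r_H\alpha\star s)_{[g]} = \bigl(\pi_H^*(r_H\alpha\star s)\bigr)_g$, which completes the comparison.

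I do not expect a genuine obstacle here, since the statement is essentially a bookkeeping identity. The only points that deserve care are the two structural facts feeding the computation: that normality of $H$ makes $[x^{-1}g]=[y]^{-1}[g]$ constant along each coset (so the inner sum can be factored), and that $r_H\alpha$ has finite support in $G/H$ so that the convolution on the quotient is well defined — both of which follow at once from $H\lhd G$ and from the finiteness of $\mathrm{supp}(\alpha)$. If anything needs to be stated explicitly, it is the independence of $(r_H\alpha)_{[y]}$ from the choice of coset representative, which is the same reindexing of $H$ already used above.
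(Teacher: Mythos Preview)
Your proof is correct and follows essentially the same direct computation as the paper: expand the convolution at $g$, partition $G$ into cosets $yH$, use normality to see that $[x^{-1}g]$ is constant along each coset, and identify the resulting sum as $(r_H\alpha\star s)_{[g]}$. The only cosmetic difference is that you invoke normality via the homomorphism property $[x^{-1}g]=[x]^{-1}[g]$ of the quotient map, whereas the paper verifies the coset equality $[h^{-1}\tau^{-1}g]=[\tau^{-1}g]$ by an explicit conjugation; these are the same fact.
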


\begin{proof}
A direct computation yields:
$$(\alpha\star\pi_H^*s)_g = \sum_{h\in G}\alpha_h s_{[h^{-1}g]} = 
\sum_{[\tau]\in G/H}\sum_{h\in H} \alpha_{\tau h} s_{[h^{-1}\tau^{-1}g]}.$$
Using that $H$ is normal in $G$ for $h\in H$ we have 
$h^* = g^{-1}\tau h\tau^{-1}g\in H$ and thus 
$[h^{-1}\tau^{-1}g] = [h^{-1}\tau^{-1}gh^*] = [\tau^{-1}g]$. We gather the terms
in the above sum obtaining,
$$\sum_{[\tau]\in G/H}\left(\sum_{h\in H} \alpha_{\tau h} \right)s_{[\tau^{-1}g]}
= \sum_{[\tau]\in G/H} (r_H\alpha)_{[\tau]}s_{[\tau^{-1}g]} = (r_H\alpha\star s)_{[g]}.$$
\end{proof}

From lemma \ref{adjoint} is clear that $\pi_H^*s$ is a $H$-periodic
solution of \eqref{conv_eq} if and only if $s$ is a solution of the
convolution equation,
$$\sigma(s) = r_H \alpha \star s,$$
in $G/H$. We have proven the following result.

\begin{proposition}
The embedding $\pi_H^*\colon \mathcal C(G/H,R)\to \mathcal C(G,R)$ identifies
the space ${\rm Sol}(r_H\alpha, R)$ with ${\rm Sol}_{H-\rm per}(\alpha,R)$, for
any $\sigma$-ring $R$. In particular, the image by $\pi_H^*$ of the fundamental
solution in $G/H$ is the fundamental $H$-periodic solution in $G$. 
\end{proposition}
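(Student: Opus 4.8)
The plan is to show that $\pi_H^*$ restricts to a bijection from $\mathrm{Sol}(r_H\alpha,R)$ onto $\mathrm{Sol}_{H-\rm per}(\alpha,R)$, with essentially all the content supplied by Lemma \ref{adjoint} and the rest being bookkeeping. The one auxiliary fact I would record first is that the shift commutes with the pullback, $\sigma\circ\pi_H^* = \pi_H^*\circ\sigma$, which is immediate since $(\pi_H^*s)_g^{(t)} = s_{[g]}^{(t)}$ and $\sigma$ acts only on the step variable $t$.

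For the forward inclusion, let $s\in\mathcal C(G/H,R)$ satisfy $\sigma(s)=r_H\alpha\star s$. Applying $\pi_H^*$ to both sides and combining the commutation relation with Lemma \ref{adjoint} gives $\sigma(\pi_H^*s)=\pi_H^*(\sigma s)=\pi_H^*(r_H\alpha\star s)=\alpha\star\pi_H^*s$, so $\pi_H^*s$ is a solution of \eqref{conv_eq}; moreover it is $H$-periodic, since $(\pi_H^*s)_{gh}=s_{[gh]}=s_{[g]}=(\pi_H^*s)_g$ for every $h\in H$. Hence $\pi_H^*$ maps $\mathrm{Sol}(r_H\alpha,R)$ into $\mathrm{Sol}_{H-\rm per}(\alpha,R)$.

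To see that this is a bijection, note that $\pi_H^*$ is injective because $\pi_H$ is surjective onto $G/H$. For surjectivity, take an $H$-periodic solution $t\in\mathrm{Sol}_{H-\rm per}(\alpha,R)$: the $H$-periodicity means that $t_g$ depends only on the coset $[g]$, so there is a unique $s\in\mathcal C(G/H,R)$ with $s_{[g]}=t_g$, that is, $\pi_H^*s=t$. Running the chain of equalities of the previous paragraph backwards and cancelling $\pi_H^*$ by injectivity yields $\sigma(s)=r_H\alpha\star s$, so $s\in\mathrm{Sol}(r_H\alpha,R)$. This establishes the identification, which is moreover $\mathbf C$-linear. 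For the fundamental solution, let $\bar\phi$ be the fundamental solution of $\sigma(s)=r_H\alpha\star s$ in $G/H$, with initial data equal to $1$ at $[e]$ and $0$ elsewhere; its pullback satisfies $(\pi_H^*\bar\phi)_g^{(0)}=\bar\phi_{[g]}^{(0)}$, which equals $1$ exactly when $g\in H$ and $0$ otherwise, precisely the initial data defining $\phi^H$. Since a solution is determined by its initial data, $\pi_H^*\bar\phi=\phi^H$.

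The argument involves no genuine obstacle, being a direct repackaging of Lemma \ref{adjoint}. The only points requiring a little care are the descent step, namely that every $H$-periodic function on $G$ is the pullback of a \emph{unique} function on the quotient, and the verification that all convolutions appearing are well defined, which holds because $\alpha$, and therefore $r_H\alpha$, has finite support.
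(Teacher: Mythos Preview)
Your proof is correct and follows exactly the same approach as the paper: both derive the proposition directly from Lemma \ref{adjoint}, with the paper simply asserting that the bijection is clear from that lemma while you spell out the forward direction, the descent of $H$-periodic functions to the quotient, and the matching of initial data for the fundamental solution. Your version is more detailed but not different in substance.
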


\begin{example} (Abel-Liouville theorem for cellular automata) The fundamental $G$-periodic solution can
be easily computed and yields:
$$\left(\phi^G\right)^{(t)}_g = \prod_{\tau=0}^{t-1}\sum_{i=1}^na_i^{(\tau)},$$
it follows that for any solution with finite support $s\in {\rm Sol}(\alpha, L^c)$ the following holds:
$$\sum_{g\in G} s^{(t)}_g =  \left(\prod_{\tau=0}^{t-1}\sum_{i=1}^na_i^{(\tau)}\right)\sum_{g\in G}s^{(0)}_g.$$
\end{example}

\subsection{Approximation of solutions by periodic and finite support solutions}
Here we give some remark about the topology of the space of solutions. 
In this section we consider in $\mathcal C(G,\rm Seq(\mathbf C))$ the
topology of the convergence along compact subsets of $\mathbf Z_+\times G$.

\begin{proposition}\label{prop_monog}
The following statements hold:
\begin{itemize}
\item[(a)] ${\rm Sol}_c(\alpha,L^c)$ is a monogenous $G$-module, namely:
$${\rm Sol}_c(\alpha,L^c) = \bigoplus_{g\in G} \langle \phi\star g \rangle.$$
\item[(b)] ${\rm Sol}(\alpha,L) = \prod_{g\in G}\langle \phi\star g \rangle.$ 
\item[(c)] ${\rm Sol}_{\rm per}(\alpha,L^{\rm per})$ is union of finite dimensional $G$-modules, and
admits a natural structure of $G^{\rm pro}$-module, where $G^{\rm pro}$ denotes the profinite completion
of $G$.
\end{itemize}
\end{proposition}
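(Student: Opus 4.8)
The plan is to base everything on the map that sends a solution to its initial condition. Because \eqref{conv_eq} is a first order recurrence $s^{(t+1)} = \alpha\star s^{(t)}$, the assignment $\Phi\colon s\mapsto s^{(0)}$ is a $\mathbf C$-linear bijection from ${\rm Sol}(\alpha,{\rm Seq}(\mathbf C))$ onto $\mathcal C(G,\mathbf C)$: injectivity and surjectivity both follow from existence and uniqueness of the solution determined by an arbitrary initial datum. First I would record the two properties of $\Phi$ that drive the proof. It is $G$-equivariant for the right action, since $(s\star g)^{(0)} = s^{(0)}\star g$; and it carries the generators to the Dirac deltas, $\Phi(\phi\star g) = \delta_g$, because $(\phi\star g)^{(0)}_h = \phi^{(0)}_{hg^{-1}} = (\delta_g)_h$. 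Thus $\langle \phi\star g\rangle$ is exactly $\Phi^{-1}(\mathbf C\,\delta_g)$, and the whole statement becomes a transcription, through $\Phi^{-1}$, of the evident decompositions of $\mathcal C(G,\mathbf C)$.

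For (a) the key point is that finite support is preserved in both directions: if $s$ has finite support so does $s^{(0)}$; conversely, since $\alpha$ has support in $U$, convolution by $\alpha$ preserves finite support, so $s^{(t)} = \alpha^{\star t}\star s^{(0)}$ stays finitely supported. Hence $\Phi$ restricts to a bijection ${\rm Sol}_c(\alpha,L^c)\to\mathcal C_c(G,\mathbf C) = \bigoplus_{g\in G}\mathbf C\,\delta_g$, giving $\bigoplus_{g\in G}\langle\phi\star g\rangle$ after transport; the coordinates of each $\phi\star g$ lie in $L^c$ because $\phi$ is a finite support solution, and monogenicity is immediate since every $\phi\star g$ is a right translate of the single generator $\phi$. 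For (b) I would first check that the formal sum $\sum_{g\in G}c_g(\phi\star g)$ defines an element of $\mathcal C(G,{\rm Seq}(\mathbf C))$ for arbitrary scalars $c_g\in\mathbf C$: evaluating at $(h,t)$ gives $\sum_g c_g\,\phi^{(t)}_{hg^{-1}}$, and finite support of $\phi^{(t)}$ leaves only finitely many nonzero terms, so the sum converges in the topology of convergence along compact subsets of $\mathbf Z_+\times G$. Its initial datum being $\sum_g c_g\delta_g$, the map $\Phi$ is the inverse bijection between ${\rm Sol}(\alpha,L) = {\rm Sol}(\alpha,{\rm Seq}(\mathbf C))$ and $\mathcal C(G,\mathbf C) = \prod_{g\in G}\mathbf C\,\delta_g$, which transcribes to $\prod_{g\in G}\langle\phi\star g\rangle$.

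For (c) I would use the quotient description already established. For $H\lhd G$ of finite index, the previous proposition identifies ${\rm Sol}_{H-\rm per}(\alpha,R)$ with ${\rm Sol}(r_H\alpha,R)$ in the finite group $G/H$ via $\pi_H^*$; by the initial datum bijection applied to $G/H$ this space has dimension $|G/H|$, hence is finite dimensional, and the right $G$-action factors through $G/H$ because an $H$-periodic $s$ satisfies $s\star h = s$ for $h\in H$. Next, ${\rm Sol}_{\rm per} = \bigcup_H {\rm Sol}_{H-\rm per}$ is a directed union: if $H_1,H_2$ are finite index normal so is $H_1\cap H_2$, and shrinking $H$ enlarges the space while refining the finite quotient acting on it. Finally I would assemble the compatible $G/H$-actions into a single action of $G^{\rm pro} = \varprojlim_H G/H$: given $\gamma = (\gamma_H)_H\in G^{\rm pro}$ and $v\in{\rm Sol}_{H-\rm per}$, set $\gamma\cdot v = \gamma_H\cdot v$; this is well defined because for $H'\subseteq H$ the image of $\gamma_{H'}$ in $G/H$ is $\gamma_H$ and the two actions agree on ${\rm Sol}_{H-\rm per}\subseteq{\rm Sol}_{H'-\rm per}$.

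The routine points are the linearity and bijectivity of $\Phi$ and the finite support bookkeeping; the two places that need genuine care are the local finiteness making the infinite products in (b) meaningful, and, in (c), checking that the family of finite dimensional $G/H$-modules is coherent enough that the limit group $G^{\rm pro}$ acts well-definedly on the union. The latter, turning a directed system of modules over the finite quotients into a genuine $G^{\rm pro}$-module, is the main conceptual obstacle.
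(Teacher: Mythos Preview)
Your proposal is correct and follows the same approach as the paper. The paper's own proof addresses only part (c), treating (a) and (b) as evident; your argument for (c) via the directed union of the finite dimensional spaces ${\rm Sol}_{H-\rm per}$ and the assembly of the compatible $G/H$-actions into a $G^{\rm pro}$-action is exactly the paper's argument, and your additional detail on (a) and (b) through the initial-condition bijection $s\mapsto s^{(0)}$ simply spells out what the paper leaves implicit.
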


\begin{proof}
(c) Note that if $H\subset F$, then the space of ${F}$-periodic solutions 
is embedded into the space of $H$-periodic
solutions. This proves that ${\rm Sol}(\alpha,L^{\rm per})$ is union of finite dimensional
$G$-modules. Let $s$ be in ${\rm Sol}(\alpha,L^{\rm per})$. 
In particular $s$ is $H$-periodic for some $H\lhd G$
of finite index. By definition, an element $\tilde g\in G^{\rm pro}$ is of the form $([g_i])_{i\in I}$,
with $[g_i]\in G/H_i$, where $I$ indexes the set of normal subgroups of $G$ of finite index. 
In particular, the component of $\tilde g$ in $G/H$ is a certain class $[g]$ represented by an element
$g\in G$. The formula, $s\star\tilde g = s\star g$ extends the $G$-module structure of  ${\rm Sol}(\alpha,L^{\rm per})$
to a $G^{\rm pro}$-module structure.
\end{proof}

\begin{proposition}
The following statements hold:
\begin{itemize}
\item[(a)] ${\rm Sol}_c(\alpha,L^c)$ is dense in ${\rm Sol}(\alpha,L)$.
\item[(b)]${\rm Sol}_{\rm per}(\alpha,L^{\rm per})$ is dense in ${\rm Sol}(\alpha,L)$ if
and only if $G$ is residually finite.
\item[(c)] If $G$ is not finite ${\rm Sol}_c(\alpha,L^{c})\cap{\rm Sol}_{\rm per}(\alpha,L^{\rm per}) = \{0\}$.
\item[(d)] If $G$ is finite then 
${\rm Sol}(\alpha,L) = {\rm Sol}(\alpha,L^{c}) = {\rm Sol}(\alpha,L^{\rm per}) \simeq \mathbf C^{|G|}$.
\end{itemize}
\end{proposition}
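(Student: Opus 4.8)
The plan is to derive all four statements from Proposition \ref{prop_monog} together with the observation that, on the discrete set $\mathbf{Z}_+\times G$, convergence along compact subsets is just convergence along finite subsets. The structural input I would use throughout is that every solution is recovered from its initial datum by the superposition formula $s=\sum_{g\in G}s^{(0)}_g\,(\phi\star g)$: this is legitimate because $\phi\star g$ has initial datum $\delta_g$, the map $s^{(0)}\mapsto s$ is $\mathbf C$-linear, and the sum is locally finite, since for a fixed pair $(t,h)$ only the finitely many translates $\phi\star g$ with $g$ in a shifted copy of the support of $\phi^{(t)}$ take a nonzero value at $(t,h)$.

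For (a) this makes the statement immediate. By Proposition \ref{prop_monog} we have ${\rm Sol}_c(\alpha,L^c)=\bigoplus_g\langle\phi\star g\rangle$ and ${\rm Sol}(\alpha,L)=\prod_g\langle\phi\star g\rangle$, so I would simply show the direct sum is dense in the product. Given $s$ and a finite $K\subset\mathbf{Z}_+\times G$, only finitely many indices $g$ make $\phi\star g$ nonzero somewhere on $K$; truncating the superposition of $s$ to exactly those indices produces a finite-support solution that agrees with $s$ on all of $K$.

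For (b) I would first reduce periodicity to a condition on initial data: a solution lies in ${\rm Sol}_{\rm per}$ if and only if its initial datum is $H$-periodic for some normal $H$ of finite index, a finite sum of periodic solutions being periodic with respect to the intersection of the relevant subgroups, which is again normal of finite index. Assuming $G$ residually finite, given $s$ and a finite $K$, the restriction of $s$ to $K$ depends only on $s^{(0)}$ over a finite \emph{cone} $F\subset G$; residual finiteness furnishes a finite-index normal $H$ for which $\pi_H$ is injective on $F$, and the $H$-periodic extension of $s^{(0)}|_F$ then yields a periodic solution coinciding with $s$ on $K$. For the converse I would argue by contrapositive: if $G$ is not residually finite there is $g_0\neq e$ lying in every finite-index normal subgroup, so every periodic solution $\tilde s$ satisfies $\tilde s^{(0)}_e=\tilde s^{(0)}_{g_0}$, whence the solution with initial datum $\delta_e$ cannot be approximated on $K=\{0\}\times\{e,g_0\}$.

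For (c), a solution that is simultaneously of finite support and $H$-periodic has an initial datum that is finite and constant along the cosets of $H$; when $G$ is infinite and $[G:H]<\infty$ the subgroup $H$ is infinite, so a nonzero such datum would be supported on an infinite coset, forcing the datum, and hence the solution, to vanish. For (d), when $G$ is finite one has $\mathcal C(G,R)=\mathcal C_c(G,R)$ and every solution is $\{e\}$-periodic, so the three spaces coincide; the linear map $s\mapsto s^{(0)}$ is a bijection onto $\mathcal C(G,\mathbf C)\cong\mathbf C^{|G|}$ by existence and uniqueness of solutions, giving the stated dimension. I expect the only genuine work to be in (b), specifically the choice of $H$ making the finite cone $F$ inject into the quotient so that the periodic extension is well defined and matches $s$ exactly on $K$; everything else reduces to bookkeeping on initial data.
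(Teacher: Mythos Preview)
Your proof is correct and follows essentially the same approach as the paper: both reduce the approximation of solutions to the approximation of initial data on finite subsets of $G$. You supply the details---the superposition formula with its local finiteness, the explicit dependence cone $F$, the injectivity of $\pi_H$ on $F$ via residual finiteness, and the contrapositive for the ``only if'' in (b)---that the paper's terse proof leaves implicit.
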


\begin{proof}
(a) and (b). As before, it is clear that to approximate a solution $s_g^{(t)}$ 
it suffices to approximate initial conditions $s_{g}^{(0)}$
along arbitrary finite sets in $G$. 
The initial conditions can be approximated by compact support initial conditions in any case.
The initial conditions can be approximated by periodic initial conditions only if $G$ is residually finite.
(c) If $G$ is not finite for any non zero periodic solution the support for any given $t$
is infinite. (d) If $G$ is finite there is not difference between periodic, compact support or
arbitrary initial data. The space of solutions is identified with the space of initial conditions
which is $\mathcal C(G,\mathbf C) \simeq \mathbf C^{|G|}$.
\end{proof}

\section{Fourier transform and linear cellular automata.}

  In this section we show how to reduce 
convolution equation \eqref{conv_eq} to its simplest form. Fourier transform
diagonalizes convolution (see \cite{Terras} for the finite Fourier transform 
and Peter-Weil theorem and \cite{Rudin} for the discrete abelian case) 
and it is a $\mathbf C$-linear
operator. Thus, the Fourier transform of a convolution equation will be 
a simpler linear difference system.

Let us recall that the dual $\widehat{G}$ of $G$ is set of irreducible unitary 
representations of $G$, modulo $G$-isomorphisms. Two important facts are the 
following: the dual of a finite group is a finite set; the dual of discrete abelian group
is a compact abelian group.

\subsection{Finite groups}
Let us consider $G$ a finite group. We take representatives $\rho_1,\ldots,\rho_m$
of the irreducible unitary representations of $G$: 
$$\rho_{k}\colon G \to {\rm GL}(d_{k}, \mathbf C),$$
in such way that we can identify the $\widehat G$ with the set $\{\rho_1,\ldots,\rho_m\}$.
Let us denote by $\mathcal O(\hat G, \mathbf C)$ the space:
$$\mathcal O(\hat G, \mathbf C) =  \bigoplus_{k = 1}^m {\rm Mat}(d_{k}\times d_{k}, \mathbf C).$$ 
This $\mathcal O(\hat G, \mathbf C)$ is seen as a space of matrix valued functions 
defined in $\widehat{G}$. Each element $f\in \mathcal O(\hat G, \mathbf C)$
is a function that assigns a matrix 
$f(\rho_{k})\in  {\rm Mat}(d_{k}\times d_{k}, \mathbf C)$ to each
irreducible representation $\rho_{k}$. Peter-Weil theorem states that the Fourier transform in $G$
is a $\mathbf C$-linear isomorphism:
$$\hat\bullet\colon \mathcal C(G,\mathbf C)\xrightarrow{\sim} \mathcal O(\hat G, \mathbf C),\quad  
\hat f(\rho_{k}) = \sum_{{k}=1}^m f(g)\rho_{k}(g).
$$
Its inverse map is given by the Fourier inversion formula:
$$f(g) = \frac{1}{|G|}\sum_{{k}=1}^m {\rm Tr}\left(\rho_{k}(g^{-1})\hat f(\rho_{k})\right).$$
Fourier transformation linearises convolution in the following sense. For any $f$ and $g$
in $\mathcal C(G,\mathbf C)$ and any $k=1,\ldots,m$ we have:
$$\widehat{(f\star g)}(\rho_{k}) = \hat f(\rho_{k})\cdot \hat g(\rho_{k}),$$
where the product in the right side of the above equation is a matrix product.

Fourier transformation extends to $\mathcal C(G,{\rm Seq}(\mathbf C))$ in
a natural way by setting $\hat s^{(t)} = \widehat{s^{(t)}}$ for any 
$s\in\mathcal C(G,{\rm Seq}(\mathbf C))$. By construction it commutes
with the shift operator $\sigma$. It maps $\mathcal C(G,{\mathbf k})$ onto the space: 
$$\mathcal O(\widehat{G}, {\mathbf k}) 
= \bigoplus_{{\mathbf k}=1}^m{\rm Mat}(d_\rho\times d_\rho, {\mathbf k}).$$
Thus, the convolution equation \eqref{conv_eq} is transformed
into a system of linear recurrences:
\begin{equation}\label{fou_eq}
\hat s_\rho^{(t+1)} = \hat \alpha_\rho^{(t)}\cdot \hat s_\rho^{(t)}, \quad \rho\in \widehat G, 
\end{equation}
 where the matrices $\hat\alpha(\rho_k)$ are in ${\rm Mat}(d_k\times d_k, \mathbf k)$.
We have thus proven the following result. 
 
\begin{proposition}
Let $G$ be a finite group with irreducible unitary representations $\{\rho_1,\ldots,\rho_m\}$ of
ranks $\{d_1,\ldots,d_m\}$ respectively. Let $\alpha$ be it $\mathcal C(G,\mathbf k)$. Let 
us us consider the convolution equation \eqref{conv_eq} in $G$. 
The equation: 
$$\sigma(\hat s) = \hat \alpha \hat s,$$
obtained by applying the Fourier
transform in $G$ to \eqref{conv_eq} consists of $m$ decoupled linear difference systems of 
rank $d_1,$ $\ldots,$ $d_m$ respectively with coefficients in $\mathbf k$. 
\end{proposition}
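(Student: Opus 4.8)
The plan is to apply the Fourier transform $\hat\bullet$ to both sides of the convolution equation \eqref{conv_eq} and to read off the resulting system componentwise in the direct sum decomposition $\mathcal O(\widehat G,\mathbf k)=\bigoplus_{k=1}^m{\rm Mat}(d_k\times d_k,\mathbf k)$. Since $G$ is finite we have $\alpha\in\mathcal C_c(G,\mathbf k)=\mathcal C(G,\mathbf k)$ and every solution lies in $\mathcal C(G,{\rm Seq}(\mathbf C))$, so the transform is everywhere defined and, by the Peter--Weil isomorphism recalled above, invertible; hence passing to Fourier transforms loses no information and the transformed system is genuinely equivalent to \eqref{conv_eq}.

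First I would use that $\hat\bullet$, extended to sequence-valued functions by $\hat s^{(t)}=\widehat{s^{(t)}}$, commutes with the shift operator, so the left-hand side becomes $\widehat{\sigma(s)}=\sigma(\hat s)$. Then I would invoke the convolution theorem stated above, namely $\widehat{\alpha\star s}(\rho_k)=\hat\alpha(\rho_k)\cdot\hat s(\rho_k)$ with $\cdot$ the matrix product in ${\rm Mat}(d_k\times d_k,{\rm Seq}(\mathbf C))$. Combining the two identities and evaluating at each $\rho_k$ yields
$$\sigma\bigl(\hat s(\rho_k)\bigr)=\hat\alpha(\rho_k)\cdot\hat s(\rho_k),\qquad k=1,\ldots,m.$$

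It remains to interpret this as $m$ decoupled systems of the stated ranks. Because $\mathcal O(\widehat G,\cdot)$ is a direct sum over $k$ and the $\rho_k$-component of the transformed equation involves only the $\rho_k$-component of $\hat s$, the $m$ equations are mutually independent, which is exactly the meaning of \emph{decoupled}. For fixed $k$ the matrix recurrence $\sigma(X)=\hat\alpha(\rho_k)\,X$ acts on $X\in{\rm Mat}(d_k\times d_k,{\rm Seq}(\mathbf C))$ column by column, so it is a finite number of independent copies of the rank-$d_k$ vector system $\sigma(\mathbf x)=\hat\alpha(\rho_k)\,\mathbf x$; its fundamental rank is therefore $d_k$. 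Finally, since $\alpha$ has coefficients in $\mathbf k$ and $\hat\bullet$ carries $\mathcal C(G,\mathbf k)$ into $\mathcal O(\widehat G,\mathbf k)$, each coefficient matrix $\hat\alpha(\rho_k)$ lies in ${\rm Mat}(d_k\times d_k,\mathbf k)$. All the analytic content being already established above, I expect no genuine obstacle; the only point meriting care is the bookkeeping of what ``rank $d_k$'' refers to, namely the size of the coefficient matrix $\hat\alpha(\rho_k)$ rather than the dimension $d_k^2$ of the block ${\rm Mat}(d_k\times d_k,\cdot)$ in which $\hat s(\rho_k)$ lives.
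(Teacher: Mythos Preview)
Your proposal is correct and follows essentially the same approach as the paper: the paper's proof is precisely the discussion preceding the proposition (it ends with ``We have thus proven the following result''), which applies the Fourier transform, uses its commutation with $\sigma$ and the convolution identity $\widehat{\alpha\star s}(\rho_k)=\hat\alpha(\rho_k)\cdot\hat s(\rho_k)$, and reads off the $m$ decoupled systems \eqref{fou_eq} with coefficient matrices in ${\rm Mat}(d_k\times d_k,\mathbf k)$. Your extra remark clarifying that ``rank $d_k$'' refers to the size of $\hat\alpha(\rho_k)$ rather than the block dimension $d_k^2$ is a helpful gloss not made explicit in the paper.
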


The linear homogeneous difference equations of first order that appear for
rank one representations in the Fourier transform of a cellular automaton
are solved in terms of $\mathbf k$-hypergeometric sequences. A 
\emph{$\mathbf k$-hypergeometric} sequence $x^{(t)}\in {\rm Seq}(\mathbf C)$
is a sequence of the form $x^{(t)} = \lambda\prod_{\tau = 0}^{t-1}a^{(\tau)}$ with
$a^{(t)}\in \mathbf k$. This is an ad-hoc definition allowing to solve
all linear homogeneous first order difference equations. The name is
motivated by the fact that the coefficients of the 
classical hypergeometric series are $\mathbf C(t)$-hypergeometric sequences.

\begin{example} \label{ex_cyclic}
Let us consider a generalized linear cellular automaton in a Cayley graph whose
underlying group is the cyclic group $\mathbf Z_T$. The system of linear difference equations 
governing its evolution is written as follows:
\begin{equation}\label{cyclic_ZT}
s_{n}^{(t+1)} = \sum_{m = 0}^{T-1}\alpha_{m}^{(t)} s_{n-m\,(\rm mod\, T)  }^{(t)}, \quad n = 0,\ldots,T-1.
\end{equation}
If we rewrite it in matrix form it yields:
$$\left(\begin{array}{c} s_0^{(t+1)} \\ \vdots \\ s_{T-1}^{(t+1)}  \end{array} \right)
=
A(t)
\left(\begin{array}{c} s_0^{(t)} \\ \vdots \\ s_{T-1}^{(t)}  \end{array} \right),
$$
where $A(t)$ is a circulant matrix of sequences. The matrix element $a_{ij}^{(t)}$ of $A(t)$ is:
$$a_{ij}^{(t)} = \alpha^{(t)}_{i-j\,(\rm mod\, T)}.$$

Let us compute the Fourier transform of equation of \eqref{cyclic_ZT}.  
The dual $\widehat{\mathbf Z_T}$ is the multiplicative group $\mathbf S^1_T\subset \mathbf C^*$ 
of $T$-th roofs of unity. 
To each root $\mu\in \mathbf S^1_T$ it corresponds an
irreducible representation $\rho_{\mu}$:
$$\rho_\mu\colon \mathbf Z_T\to \mathbf C^*, \quad {k} \mapsto \mu^{k}.$$
The product in $\mathbf S_T^1$ is compatible with
the tensor product of representations in the following way:
$\rho_\mu \otimes \rho_{\tau} = \rho_{\mu\tau}$. Note that $\mathbf S^1_T$ is
a cyclic group isomorphic to $\mathbf Z_T$, but the isomorphism depends
of the choice of a primitive $T$-th root of unity. Let us take
$\xi = \exp(T^{-1} 2\pi i)$. For $f\in \mathcal C(\mathbf Z_T,\mathbf C)$ 
we have $\hat f\in\mathcal(\mathbf S^1_T,\mathbf C)$. Let us write $\hat f_{k}$ for
$\hat f(\xi^k)$. The Fourier transform is written in matrix form 
as follows:
$$\left(\begin{array}{c} \hat f_0 \\ \vdots \\ \hat f_{T-1}  \end{array} \right)
=
\left(\begin{array}{ccccc} 
1  & \xi & \xi^2 & \ldots & \xi^{T-1}_T \\
1 & \xi^2 & \xi^4 & \ldots & \xi^{2(T-1)_T}  \\
\vdots & \vdots & \vdots & \ddots & \vdots \\ 
1 & \xi^{T-1} & \xi^{2(T-1)} & \ldots & \xi^{(T-1)(T-1)}  \end{array} \right)
\left(\begin{array}{c} f_0 \\ \vdots \\ f_{T-1}  \end{array} \right)$$
In particular we have:
$$\hat\alpha_{k}^{(t)} = \sum_{n=0}^{T-1} \xi^{nk}\alpha_n^{(t)}.$$
The system for the Fourier transform $\hat s$ of the state $s$ in \eqref{cyclic_ZT} yields,
$$\left(\begin{array}{c} \hat s_0^{(t+1)} \\ \vdots \\ \hat s_{T-1}^{(t+1)}  \end{array} \right)
=
\left(\begin{array}{cccc} 
\hat\alpha^{(t)}_0 &  0 & \ldots & 0 \\
0 & \hat\alpha^{(t)}_1  & \ldots & 0  \\
\vdots & \vdots & \ddots & \vdots \\ 
0 & 0 & \ldots & \hat\alpha_{T-1}^{(t)}  \end{array} \right)
\left(\begin{array}{c} \hat s_0^{(t)} \\ \vdots \\ \hat s_{T-1}^{(t)}  \end{array} \right)$$
Which consist on $T$ decoupled linear homogeneous difference equations. Their solutions can be described 
in terms of $\mathbf k$-hypergeometric sequences:
$$\hat s^{(t)}_m = \hat s_{k}^{(0)}\prod_{\tau = 0}^{t}\hat\alpha_m^{(t)}.$$
The fundamental solution $\phi$ is the solution of \eqref{cyclic_ZT} with initial codition 
$\phi^{(0)}=\delta_0$. The Fourier transform $\widehat{\delta_0}$ 
is the constant function $1\in\mathcal C(\mathbf S^1_T,\mathbf C)$. Thus,
$$\hat\phi_m^{(t)} = \prod_{\tau = 0}^{t}\hat\alpha_m^{(\tau)}.$$
We now apply the inverse Fourier transform to obtain the fundamental solution:
\begin{equation}\label{phi_ZT}
\phi^{(t)}_n = \frac{1}{T}\sum_{k=1}^{T-1}\xi_T^{-kn}\prod_{\tau=1}^t \hat\alpha_{k}^{(\tau)}.
\end{equation}
The $\sigma$-ring extension spanned by all solutions is:
$${\mathbf k} \subseteq 
{\mathbf k}[\beta_1,\ldots,\beta_{T-1}]\subset {\rm Seq(\mathbf C)}, \quad \mbox{ where } \quad 
\beta_m^{(t)} = \prod_{\tau=1}^t \hat\alpha_m^{(\tau)}$$
are $\mathbf k$-hypergeometric sequences.
\end{example}

\begin{example}\label{W90T}
 (Periodic Wolfram's 90) Let us consider the following difference
equations corresponding to an automaton in 
$\mathcal G = (\mathbf Z_T, \{-1,1\})$:
$$s^{(t+1)}_n = s^{(t)}_{n-1 \,({\rm mod}\, T)} + s^{(t)}_{n+1\,({\rm mod}\, T)}, \quad n = 0,\ldots,T-1.$$
When taking states in the finite field $\mathbf F_2$ this is the well-known Wolfram's rule 90 
(see \cite{Voorhees}, page 29). We consider it as a cellular automaton with complex states.
Its fundamental solution $\phi$ can be computed combinatorially. Let us denote $|n|$ the distance
from $n$ to $0$ in $\mathbf Z_T$. For $t<\frac{T-1}{2}$ we have the following expression:
$$\phi_{n}^{(t)} = \begin{cases} 
\left(\begin{array}{c} t \\ \frac{t \pm |n|}{2}\end{array}\right) \quad \mbox{if} \quad  \frac{t \pm |n|}{2}\in 
\left\{1,2,\ldots,t\right\}  
\\  \\
0 \quad\quad\quad\quad\quad \mbox{otherwise.}
\end{cases}.$$
On the other hand, by application of Fourier transform in $\mathbf Z_T$ -this is a 
particular case of the previous example \eqref{cyclic_ZT} - we obtain a formula
for its fundamental solution: 
\begin{equation}\label{phi_w90T}
\phi^{(t)}_n = \frac{2^t}{T}\sum_{m=1}^{T-1} \cos\left(\frac{2\pi m n}{T} \right)\cos^t\left(\frac{2\pi m}{T}\right).
\end{equation}
By comparison of both methods of resolution we obtain a trigonometric development for the combinatorial numbers:
\begin{equation}\label{comb_trig}
\left(\begin{array}{c} t \\ b\end{array}\right) = 
\frac{2^t}{T}\sum_{m=1}^{T-1} \cos\left(\frac{2\pi m (2b+t)}{T} \right)\cos^t\left(\frac{2\pi m}{T}\right),
\end{equation}
that holds for all $T> 2t+1$. Finally, the $\sigma$-ring extension associated to this automaton is:
$$\mathbf C \subset \mathbf C\left[2^t,2^t\cos^t\left(\frac{2\pi}{T}\right),\ldots,
2^t\cos^t\left(\frac{2\pi m}{T}\right),
\ldots,2^t\cos^t\left(\frac{2\pi(\lfloor T/2\rfloor)}{T}\right)
\right].$$ 
\end{example}

\begin{example}
Let us consider a Cayley graph $\mathcal G = (S_3, \{(1,3,2),(1,2)\})$ of fig. \ref{fig3} (right). 
A generalized non-autonomous linear cellular automaton in $\mathcal G$ 
with coefficients in ${\mathbf k}$ is convolution equation \eqref{conv_eq} where the support of 
$\alpha\in\mathcal C(S_3,\mathbf C)$ is contained in $\{(1,3,2),(1,2)\}$. 
Let us denote $\alpha(1,2,3)= a^{(t)}\in {\mathbf k}$ and $\alpha(1,2) = b^{(t)}\in {\mathbf k}$. 
The evolution equations for solutions of the cellular automaton are:
\begin{equation}\label{aut_s3}
\left(\begin{array}{c} s_{(e)}^{(t+1)} \\ s_{(123)}^{(t+1)} \\ s_{(132)}^{(t+1)} \\
s_{(12)}^{(t+1)} \\ s_{(13)}^{(t+1)} \\ s_{(23)}^{(t+1)} \end{array} \right)
=
\left(\begin{array}{cccccc} 
0 & a^{(t)} & 0 & b^{(t)} & 0 & 0 \\
0 & 0 & a^{(t)} & 0 & 0 & b^{(t)}  \\
a^{(t)} & 0 & 0 & 0 & b^{(t)} & 0 \\ 
b^{(t)} & 0 & 0 & 0 & a^{(t)} & 0 \\
0 & 0 & b^{(t)} & 0 & 0 & a^{(t)} \\
0 & b^{(t)} & 0 & a & 0 & 0 
\end{array} \right)
\left(\begin{array}{c}
s_{(e)}^{(t)} \\ s_{(123)}^{(t)} \\ s_{(132)}^{(t)} \\
s_{(12)}^{(t)} \\ s_{(13)}^{(t)} \\ s_{(23)}^{(t)} 
 \end{array} \right)
\end{equation}
The dual $\widehat{S_3} = \{\iota, \epsilon, \varrho \}$, consists of three elements
(see, for instance, \cite{Terras} page 264). Namely, the
trivial representation $\iota_g = 1$, the parity $\epsilon_g = (-1)^{g}$ and the standard
representation, $\varrho_g(e_i)=e_{g(i)}$ defined in the plane $\langle e_1 - e_2, e_1 - e_3 \rangle \simeq \mathbf C^2$.
The Fourier transform of $\alpha$ is:
\begin{eqnarray*}
\hat\alpha \colon   & \iota    \mapsto & a^{(t)} + b^{(t)} \\
                  & \epsilon \mapsto & a^{(t)} - b^{(t)} \\
                  & \varrho  \mapsto & A^{(t)} \colon = \left( \begin{array}{cc} -b^{(t)} & a^{(t)}-b^{(t)} 
                  \\ -a^{(t)} & b^{(t)}-a^{(t)} \end{array}\right) 
\end{eqnarray*}
The Fourier transform of the linear difference system \eqref{aut_s3} consist
of three decoupled linear difference systems, of rank one, one and two respectively:
\begin{eqnarray}
\hat s_\iota^{(t+1)} &=& (a^{(t)} + b^{(t)})\hat s_{\iota}^{(t)}  \label{s1}\\ 
\hat s_\epsilon^{(t+1)} &=& (a^{(t)} - b^{(t)})\hat s_{\epsilon}^{(t)} \label{s2} \\
\left(\begin{array}{cc} \hat s_{\varrho 11}^{(t+1)} & \hat s_{\varrho 12}^{(t+1)} \\ 
\hat s_{\varrho 21}^{(t+1)} & \hat s_{\varrho 22}^{(t+1)} 
\end{array} \right)
&=&
\left(\begin{array}{cc} -b^{(t)} & a^{(t)} - b^{(t)} \\ 
-a_{21}^{(t)} & b^{(t)}-a^{(t)}  
\end{array} \right)
\left(\begin{array}{cc} \hat s_{\varrho 11}^{(t)} & \hat s_{\varrho 12}^{(t)} \\ 
\hat s_{\varrho 21}^{(t)} & \hat s_{\varrho 22}^{(t)} 
\end{array} \right) 
\end{eqnarray}
The $\sigma$-ring extension 
${\mathbf k}\subseteq {\mathbf k}[\phi_{g}^{(t)}]_{g\in S_3}\subset {\rm Seq}(\mathbf C)$ spanned 
by solutions of \eqref{aut_s3} is:
${\mathbf k}[u^{(t)},v^{(t)},w^{(t)}_{11}, w^{(t)}_{12},w^{(t)}_{21},w^{(t)}_{22}]$ where,
$$u^{(t)} = \prod_{\tau=0}^{t-1}(a^{(\tau)} + b^{(\tau)}), 
\quad v^{(t)} = \prod_{\tau=0}^{t-1}(b^{(\tau)}- a^{(\tau)}),$$
$$
\left(\begin{array}{cc} w_{11}^{(t)} & w_{12}^{(t)} \\ 
w_{21}^{(t)} & w_{22}^{(t)} 
\end{array} \right)
=
\left(\begin{array}{cc} -b^{(t-1)} & a^{(t-1)} - b^{(t-1)} \\ 
-a_{21}^{(t-1)} & b^{(t)}-a^{(t-1)}  
\end{array} \right)
\cdots
\left(\begin{array}{cc} -b^{(0)} & a^{(0)} - b^{(0)} \\ 
-a_{21}^{(0)} & b^{(0)}-a^{(0)}  
\end{array} \right).
$$
Note that sequences $u^{(t)}$ and $w^{(t)}$, corresponding to fundamental
$S_3$-periodic and $A_3$-periodic solutions are $\mathbf k$-hypergeometric.
Sequences $w_{ij}^{(t)}$ are not $\mathbf k$-hypergeometric in general. We may
compare graphs in fig. \ref{fig3}. Linear cellular automata in the
left graph are solved by $\mathbf k$-hypergeometric sequences (example \ref{ex_cyclic})
but those in the right graph may have non-  $\mathbf k$-hypergeometric sequences in their general
solution due to the non-commutativity of their underlying group.
\end{example}

\subsection{Discrete abelian groups}
Let us consider $G$ a discrete abelian group. We will use additive notation for the operation
in $G$, that fits with the classical notation of Fourier transform.
All irreducible unitary representations of $G$ are of rank $1$. 
Its dual $\widehat{G}$ is identified with the set of group homomorphisms from $G$ to
the unit circle $\mathbf S^{1}\subset\mathbf C^{*}$.
The set $\widehat G$ is endowed with a product - coming from the tensor product of 
representations or equivalenty from the product in $\mathbf S^1$- that makes it an
abelian group. It is also endowed with the topology coming from the convergence
in compact subsets of $\mathbf G$ of maps from $G$ to $\mathbf S^1$. With these
product and topology $\widehat{G}$ is a compact abelian group (see \cite{Rudin}).
 
As we have seen in example \ref{ex_cyclic}, the dual of the cyclic group $\mathbf Z_T$
is the group $\mathbf S_T^1$ of $T$-th roots of the unity. 
The dual of the integer numbers $\mathbf Z$ is the unit
circle $\mathbf S^1$. Each unitary complex number $e^{i\theta}\in \mathbf S^1$ 
corresponds to an unitary representation:
$$\rho_{\theta} \colon \mathbf Z \to \mathbf S^1\subset \mathbf C^*, \quad n \mapsto e^{in\theta}.$$ 

Let us recall that function $\alpha$ in equation \eqref{conv_eq} has finite support. Thus, we 
consider the Fourier transform for finite support functions, avoiding the problem of convergence
of Fourier series:
$$\mathcal F\colon \mathcal C_c(G,\mathbf C)\hookrightarrow \mathcal C(\widehat{G}, \mathbf C), 
\quad \mathcal F(f)(\rho) = \hat f(\rho) = \sum_{g\in G}f(g)\rho(g).$$
It is useful to introduce the following notation for the image $\mathcal F(\mathcal C_c(G,\mathbf C)$
of the Fourier transform. For each
morphism $\rho\in\widehat{G}$ we write $\theta(\rho) = -i\ln(\rho)$. In this way 
$\theta(\rho)\colon G\to\mathbf R/2\pi\mathbf Z$ is the same homomorphism  $\rho$ but 
expressed in angle coordinates, and for each $g\in G$ the function 
$e^{ig\theta}\colon \widehat{G}\to \mathbf C$ 
maps $\rho$ to $\rho(g)$. 
The image $\mathcal F(\mathcal C_c(G,\mathbf C))$ is thus the ring of \emph{Fourier polynomials}
$\mathbf C[e^{ig\theta}\colon g\in G]$. The Fourier monomials satisfy the relations $e^{ig\theta}e^{ih\theta} = e^{i(g+h)\theta}$, thus $e^{i0\theta} = 1$ and if $g$ is 
a torsion element of order $T$ then $e^{ig\theta}$ is a primitive T-th root of unity. 
The inverse of the Fourier transform is given by the Fourier inversion formula:
$$f(g) = \int_{\widehat G} e^{-ig\theta} \hat f(e^{i\theta}) d\theta,$$
where $d\theta$ denotes the normalized Haar measure in $\widehat G$. Note that:
$$\int_{\widehat G} e^{ig\theta} d\theta = \begin{cases} 
1 \quad \mbox{if} \quad g = 0, \\
0 \quad \mbox{if} \quad g \neq 0.
\end{cases}$$
Now, we take \emph{formal Fourier series} in $\mathbf C[[e^{ig\theta}\colon g\in G]]$. The 
space of formal Fourier series is not a ring. However, the product of a formal Fourier
series and a Fourier polynomial is well defined, and thus the Fourier inversion formula. We have
then an extension of the Fourier transform defined in the space $\mathcal C(G,\mathbf C)$,
$$\mathcal F\colon \mathcal C(G,\mathbf C)\xrightarrow\sim \mathbf C[[e^{ig\theta}\colon g\in G]]
\supset \mathcal C(\widehat{G},\mathbf C),
\quad \mathcal F(f) = \sum_{g\in G}f_g e^{ig\theta}.$$
This Fourier transform swaps product and convolution and maps the convolution equation \eqref{conv_eq}
onto an equation of the form,
$$\sigma(\hat s_\theta) = \hat\alpha_\theta\cdot \hat s_\theta,$$
where $\hat\alpha_\theta$ is a Fourier polynomial and $\hat s_\theta$ a formal Fourier series. For 
the fundamental solution $\phi^{(t)}$ the initial data $\phi_g^{(0)}=\delta_0$ has finite
support and then we easily compute a formula involving $\mathbf k$-hypergeometric sequences 
and integration along $\widehat G$, proving the following result.

\begin{theorem}\label{theorem25}
Let $G$ be a discrete abelian group and $\alpha\in \mathcal C_c(G,\mathbf C)$. The
fundamental solution to the convolution equation \eqref{conv_eq} is:
$$\phi_g^{(t)} = \int_{\widehat G} e^{-ig\theta}\left(\prod_{\tau=0}^{t-1}\hat\alpha_\theta^{(\tau)}\right) d\theta.$$
Hence, the $\sigma$-ring extension $\mathbf k\subseteq L^c = \mathbf k[\phi_g^{(t)}\colon g\in G]$ 
of finite support solutions is spanned by sequences 
obtained by integration along $\widehat G$ of 
$\mathbf k[e^{ig\theta}:g\in G]$-hypergeometric sequences.
\end{theorem}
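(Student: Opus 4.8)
The plan is to diagonalize the convolution equation by the discrete Fourier transform, solve the resulting scalar recurrence explicitly, and then recover $\phi$ by Fourier inversion. First I would apply $\mathcal F$ to \eqref{conv_eq}. Since $G$ is abelian all its irreducible representations have rank one, so $\mathcal F$ carries the single convolution equation to a family of \emph{scalar} first-order difference equations $\sigma(\hat s_\theta) = \hat\alpha_\theta\cdot\hat s_\theta$, one for each $\rho\in\widehat G$, and it commutes with $\sigma$. The fundamental datum $\phi^{(0)}=\delta_0$ has transform $\widehat{\delta_0} = \sum_{g}(\delta_0)_g\,e^{ig\theta} = e^{i0\theta} = 1$, the constant Fourier monomial. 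Solving the scalar recurrence from this initial condition gives $\hat\phi_\theta^{(t)} = \prod_{\tau=0}^{t-1}\hat\alpha_\theta^{(\tau)}$, exactly the pattern already seen in the cyclic computation of Example \ref{ex_cyclic}.

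The point that makes inversion legitimate is that this transform remains inside the ring of Fourier polynomials and never escapes to a genuine formal series. Because $\alpha$ has finite support, each factor $\hat\alpha_\theta^{(\tau)} = \sum_g \alpha_g^{(\tau)}e^{ig\theta}$ lies in $\mathbf k[e^{ig\theta}\colon g\in G]$, and a finite product of Fourier polynomials is again a Fourier polynomial; hence $\hat\phi_\theta^{(t)}$ is one for every fixed $t$. (This matches the fact that $\phi^{(t)}$ has finite support for all $t$, since the condition $f(0,\dots,0)=0$ forces $\varphi$ to preserve finite support.) I may therefore apply the Fourier inversion formula term by term, and the orthogonality relation $\int_{\widehat G}e^{ig\theta}\,d\theta = \delta_{g,0}$ produces $\phi_g^{(t)} = \int_{\widehat G} e^{-ig\theta}\bigl(\prod_{\tau=0}^{t-1}\hat\alpha_\theta^{(\tau)}\bigr)\,d\theta$, which is the asserted formula.

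For the second (``Hence'') assertion I would invoke Proposition \ref{prop_monog}(a), which gives ${\rm Sol}_c(\alpha,L^c) = \bigoplus_{g\in G}\langle\phi\star g\rangle$. Thus every finite-support solution is a finite $\mathbf C$-combination of translates of $\phi$, whose entries are $(\phi\star g)_h^{(t)} = \phi_{h-g}^{(t)}$, again entries of $\phi$ itself. Consequently the smallest $\sigma$-ring over which all finite-support solutions are defined is precisely $L^c = \mathbf k[\phi_g^{(t)}\colon g\in G]$. Reading these generators through the formula, each $\phi_g^{(t)}$ is the integral over $\widehat G$ of $e^{-ig\theta}$ against $\prod_{\tau=0}^{t-1}\hat\alpha_\theta^{(\tau)}$, and the latter is, by the definition recalled just before the theorem, a $\mathbf k[e^{ig\theta}\colon g\in G]$-hypergeometric sequence of ratio $\hat\alpha_\theta$. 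This is exactly the description of $L^c$ claimed in the statement.

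The computation is short, and the real care is required only in the passage between the formal and the polynomial Fourier pictures. The honest obstacle is to ensure one never has to integrate a true formal Fourier series: this is guaranteed here only because $\alpha$ has finite support \emph{and} we restrict to the fundamental solution, so that the transform truncates to a Fourier polynomial at each finite $t$. For a generic solution with infinite support the transform is only a formal series and the inversion integral would demand a separate convergence argument; this is precisely why the theorem is framed for $\phi$ and for the finite-support ring $L^c$ rather than for $L$.
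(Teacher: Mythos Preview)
Your proposal is correct and follows essentially the same approach as the paper: the paper's argument is the short paragraph immediately preceding the theorem, which applies the Fourier transform to reduce \eqref{conv_eq} to the scalar equation $\sigma(\hat s_\theta)=\hat\alpha_\theta\hat s_\theta$, observes that the initial datum $\phi^{(0)}=\delta_0$ has finite support so its transform is the Fourier polynomial $1$, and then inverts. Your write-up is in fact more careful than the paper's, since you make explicit why $\hat\phi_\theta^{(t)}$ remains a Fourier polynomial at each $t$ (so inversion is legitimate) and you invoke Proposition~\ref{prop_monog}(a) to justify that $L^c=\mathbf k[\phi_g^{(t)}:g\in G]$, a point the paper leaves implicit.
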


The action of $G$ on $\mathcal C_c(G,{\rm Seq}(\mathbf C))$ is transported by means of the Fourier
transform to the ring $\rm Seq(\mathbf C)[e^{ig\theta}\colon g\in G]$ of Fourier polynomials
with sequence coefficients. This action is written as $e^{ig\theta}\star h = e^{i(g+h)\theta}$. 
We have the following result:

\begin{theorem}\label{theorem26}
The Fourier transform is a $G$-module isomorphism between 
${\rm Sol}_c(\alpha, L^c)$ and 
$\left(\prod_{\tau=0}^{t-1}\hat\alpha_\theta^{(\tau)}\right)\cdot\mathbf C[e^{ig\theta}\colon g\in G]
\subset {\rm Seq}(\mathbf C)[e^{ig\theta}\colon g\in G]$.
\end{theorem}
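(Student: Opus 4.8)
The plan is to reduce the statement to three facts already in hand: the monogenous description of the finite-support solution space in Proposition~\ref{prop_monog}(a), the shape of the Fourier transform of the fundamental solution from Theorem~\ref{theorem25}, and the elementary transport rules of $\mathcal F$, namely that it intertwines $\sigma$ with $\sigma$, convolution with product, and the right $G$-action $s\mapsto s\star g$ with multiplication by the Fourier monomial $e^{ig\theta}$.

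First I would pin down the Fourier transform of the translates of $\phi$. Applying $\mathcal F$ to \eqref{conv_eq} turns it into the scalar recurrence $\hat s_\theta^{(t+1)} = \hat\alpha_\theta^{(t)}\hat s_\theta^{(t)}$, and since $\phi^{(0)}=\delta_e$ has transform the constant $1$, one gets $\hat\phi_\theta^{(t)} = \prod_{\tau=0}^{t-1}\hat\alpha_\theta^{(\tau)}$, exactly the factor in the statement (this is the content of Theorem~\ref{theorem25}). Writing $\phi\star g = \phi\star\delta_g$ and using that $\mathcal F$ sends convolution to product with $\mathcal F(\delta_g)=e^{ig\theta}$, I would conclude $\mathcal F(\phi\star g) = \hat\phi_\theta\, e^{ig\theta}$ for every $g\in G$.

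Next I would assemble the isomorphism. By Proposition~\ref{prop_monog}(a) the space ${\rm Sol}_c(\alpha,L^c) = \bigoplus_{g\in G}\langle\phi\star g\rangle$ is the $\mathbf C$-vector space with basis $\{\phi\star g\}_{g\in G}$. Applying $\mathcal F$ termwise sends this basis to $\{\hat\phi_\theta\, e^{ig\theta}\}_{g\in G}$; because $\mathcal F$ is injective and the $\phi\star g$ are $\mathbf C$-linearly independent, their images are $\mathbf C$-linearly independent as well, so their $\mathbf C$-span is precisely $\hat\phi_\theta\cdot\mathbf C[e^{ig\theta}\colon g\in G]$, the subspace of ${\rm Seq}(\mathbf C)[e^{ig\theta}\colon g\in G]$ with basis $\{\hat\phi_\theta\,e^{ig\theta}\}$. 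This identifies the image of ${\rm Sol}_c(\alpha,L^c)$ with the claimed target and shows $\mathcal F$ is a bijection onto it. For $G$-equivariance, the transport rule $e^{ih\theta}\star g = e^{i(h+g)\theta}$ gives $(\hat\phi_\theta e^{ih\theta})\star g = \hat\phi_\theta e^{i(h+g)\theta} = \mathcal F(\phi\star(h+g)) = \mathcal F((\phi\star h)\star g)$, so $\mathcal F$ commutes with the right $G$-action and is a $G$-module isomorphism.

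The one point that deserves care, and where the actual content sits, is that multiplication by $\hat\phi_\theta$ must not collapse the distinct monomials $e^{ig\theta}$; this is not an algebraic triviality, since the sequence-coefficient ring ${\rm Seq}(\mathbf C)[e^{ig\theta}\colon g\in G]$ may contain zero divisors. I would not argue it directly but instead force it from the injectivity of $\mathcal F$ together with the direct-sum decomposition of Proposition~\ref{prop_monog}(a): the independence of the $\phi\star g$ upstream is exactly what guarantees the independence of their images $\hat\phi_\theta e^{ig\theta}$ downstream. Everything else is bookkeeping with the two compatible $G$-actions.
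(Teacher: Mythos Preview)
Your proposal is correct and follows essentially the same approach as the paper: both invoke Proposition~\ref{prop_monog}(a) to write ${\rm Sol}_c(\alpha,L^c)=\bigoplus_{g\in G}\langle\phi\star g\rangle$, push this direct sum through $\mathcal F$ using $\mathcal F(\phi\star g)=\hat\phi_\theta\,e^{ig\theta}$, and identify $\hat\phi_\theta$ with $\prod_{\tau=0}^{t-1}\hat\alpha_\theta^{(\tau)}$. Your version is more careful than the paper's---you make the $G$-equivariance explicit and flag the zero-divisor issue that the paper passes over in silence---but the argument is the same.
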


\begin{proof}
By proposition \ref{prop_monog}, the space ${\rm Sol}_c(\alpha, L^c)$ 
is the monogenous $G$-module spanned as by the fundamental solution $\phi_g^{(t)}$.
Hence:
$$\mathcal F\left(\oplus_{g\in G}\langle\phi\star g\rangle\right) = 
\oplus_{g\in G}\langle \hat\phi_\theta^{(t)}\cdot e^{ig\theta}\rangle =
\hat\phi_\theta^{(t)}\cdot  C[e^{ig\theta}\colon g\in G].$$
The Fourier transform of $\phi_g^{(t)}$ is 
$\left(\prod_{\tau=0}^{t-1}\hat\alpha_\theta^{(\tau)}\right),$ finishing the proof.
\end{proof}

\begin{example}\label{example_fs}
Let us consider now the following system of linear difference equations:
$$s_{n}^{(t+1)} = \sum_{m = -R}^{R}\alpha_{m}^{(t)} s_{n-m}^{(t)},\quad n\in \mathbf Z,$$
where $\alpha\in\mathcal C(\mathbf Z,{\mathbf k})$ has support in $\{-R,\ldots,R\}$. This is a linear
generalized cellular automaton in the Cayley graph $(\mathbf Z,\{-R,\ldots,R\})$ with coefficients in ${\mathbf k}$. Let us compute the space of solutions with finite support. 
The Fourier transform of $\alpha$ is the 
Fourier polynomial $\hat\alpha\in {\mathbf k}[e^{i\theta},e^{-i\theta}]$,
$$\hat\alpha^{(t)}_\theta = \sum_{m = -R}^R \alpha^{(t)}_m e^{im\theta}.$$
The Fourier transform of the equation 
is a single first order linear difference equation with coefficients
in ${\mathbf k}[e^{i\theta},e^{-i\theta}]$. This equation is solved by a $\mathbf k$-hypergeometric
sequence:
$$\hat s^{(t)} = \hat s^{(0)}\beta^{(t)}\quad \mbox{were}
\quad\beta^{(t)}_\theta = \prod_{\tau = 0}^{t-1}\hat \alpha_\theta^{(\tau)}.$$
By inverse Fourier transform 
we obtain an integral formula for the fundamental solution:
$$\phi_{n}^{(t)}= \frac{1}{2\pi} \int_{0}^{2\pi} e^{-in\theta} \beta^{(t)}_\theta d\theta.
$$
The $\sigma$-extension ${\mathbf k}\subseteq L^c \subset {\rm Seq}(\mathbf C)$ spanned by finite support solutions is:
$$L^c = {\mathbf k}[\ldots,\phi_{-2}^{(t)},\phi_{-1}^{(t)},\phi_0^{(t)},\ldots]$$
And finally, we have an isomorphism of $\mathbf Z$-modules:
$$\mathcal F\colon{\rm Sol}(\alpha, L^c) \xrightarrow\sim 
\beta^{(t)}_\theta\cdot \mathbf C[e^{i\theta},e^{-i\theta}],
\quad \phi^{(t)}_n\star m \mapsto \beta_\theta^{(t)} e^{im\theta}.$$
\end{example}

\begin{example}\label{ex_W90Z}
(Finite support Wolfram's 90) The following system is a particular case of 
example \ref{example_fs} analogous to example \ref{W90T}:
\begin{equation}\label{W90Z}
s^{(t+1)}_n = s^{(t)}_{n-1} + s^{(t)}_{n+1}, \quad n\in\mathbf Z
\end{equation}
The fundamental solution is computed combinatorially:
$$\phi_{n}^{(t)} = \begin{cases} 
\left(\begin{array}{c} t \\ \frac{t - n}{2}\end{array}\right) \quad \mbox{if} \quad  \frac{t-n}{2}\in 
\left\{1,2,\ldots,t\right\}  
\\  \\
0 \quad\quad\quad\quad\quad \mbox{otherwise.}
\end{cases}.$$
On the other hand, by application of Fourier transform in $\mathbf Z$ we obtain another formula
for the fundamental solution. Note that in this case $\hat\alpha_\theta = 2\cos(\theta)$
\begin{equation}\label{phi_w90Z}
\phi^{(t)}_n = \frac{2^{t-1}}{\pi}\int_0^{2\pi} \cos(n\theta)cos^t(\theta)d\theta.
\end{equation}
Thus, by comparison of both methods of resolution we obtain a trigonometric development for the combinatorial 
numbers:
\begin{equation}\label{comb_int}
\left(\begin{array}{c} t \\ b\end{array}\right) = 
\frac{2^{t-1}}{\pi}\int_0^{2\pi} \cos((2b+t)\theta)\cos^t(\theta)d\theta.
\end{equation}
Compare the above formulas with equations \eqref{phi_w90T} and \eqref{comb_trig}. Finally the
$\sigma$-ring of sequences spanned by the finite support solutions is:
$$L^c = \mathbf C[\ldots,\phi_{-1}^{(t)},\phi_0^{(t)},\phi_1^{(t)},\ldots],$$
and the Fourier transform:
$$\mathcal F\colon {\rm Sol}_c(\alpha,L^c) \xrightarrow{\sim} 
2^t\cos^t(\theta)\cdot\mathbf C[e^{i\theta},e^{-i\theta}].$$
\end{example}

Finally let us compute all periodic solutions of a linear cellular automaton in a discrete
abelian group. Let us denote by ${\rm Tor}(\widehat G)$ the torsion part of $G$, that is
the subgroup of $\widehat{G}$ consisting of finite order elements. We have the following result:

\begin{theorem}\label{theorem29}
Let $G$ be discrete abelian group, $\alpha$ in $\mathcal C_c(G,\mathbf C)$ and let us consider
the convolution equation \eqref{conv_eq}. The $\sigma$-ring extension spanned by all periodic
solutions is:
$$\mathbf k \subseteq L^{\rm per} =
\mathbf k\left[ \prod_{\tau=0}^{t-1} \hat\alpha_{\theta}^{(\tau)} \colon \theta\in 
{\rm Tor}(\widehat{G}) \right].$$  
\end{theorem}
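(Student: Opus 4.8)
The plan is to reduce the computation of $L^{\rm per}$ to a family of finite abelian problems and then pass to the limit. First I would invoke the Proposition following Lemma \ref{adjoint}: for every subgroup $H$ of finite index the embedding $\pi_H^*$ identifies ${\rm Sol}_{H-{\rm per}}(\alpha, R)$ with ${\rm Sol}(r_H\alpha, R)$, the solution space of the \emph{quotient} convolution equation $\sigma(s) = r_H\alpha \star s$ posed on $G/H$. Since $G$ is abelian and $H$ has finite index, $G/H$ is a finite abelian group, so this reduction brings us entirely into the finite setting already analyzed. Because $(\pi_H^* s)_g = s_{[g]}$, the sequence values occurring in $H$-periodic solutions on $G$ coincide with those occurring in solutions on $G/H$, so the $\sigma$-ring generated by the $H$-periodic solutions equals the $\sigma$-ring generated by all solutions of the quotient equation; thus $L^{\rm per}$ is the join of these rings as $H$ ranges over all finite-index subgroups.

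Next I would apply the finite Fourier analysis to each quotient. For the finite abelian group $G/H$ every irreducible unitary representation has rank one, so its dual $\widehat{G/H}$ is a finite abelian group of characters, and the Fourier transform decouples the quotient equation into $|G/H|$ scalar first-order recurrences indexed by $\chi \in \widehat{G/H}$, exactly as in Example \ref{ex_cyclic}. Each such recurrence is solved by the $\mathbf{k}$-hypergeometric sequence $\prod_{\tau=0}^{t-1}\widehat{r_H\alpha}_\chi^{(\tau)}$, and by the Fourier inversion formula every solution value is a $\mathbf{C}$-linear combination of these products; conversely each product is the Fourier coefficient at $\chi$ of the fundamental solution, hence a $\mathbf{C}$-linear combination of actual solution values (using the monogenicity of Proposition \ref{prop_monog}). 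Thus the $\sigma$-ring attached to $G/H$ is precisely $\mathbf{k}\left[\prod_{\tau=0}^{t-1}\widehat{r_H\alpha}_\chi^{(\tau)} : \chi \in \widehat{G/H}\right]$.

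The bridge between the quotients and $G$ itself is the identity $\widehat{r_H\alpha}_\chi = \hat\alpha_\theta$, valid whenever the character $\chi$ of $G/H$ is identified with the character $\theta = \chi\circ\pi_H$ of $G$ that it induces (a character trivial on $H$). This is a one-line consequence of the adjointness in Lemma \ref{adjoint}, and it is the crucial compatibility statement: the hypergeometric sequence produced by $\chi$ depends only on the induced character $\theta \in \widehat{G}$, not on the ambient quotient. In particular, if $H' \subseteq H$ then $\widehat{G/H} \subseteq \widehat{G/H'}$ as subgroups of $\widehat{G}$ and the corresponding generators agree, so the family of $\sigma$-rings above forms a nested direct system whose union is unambiguous.

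Finally I would identify the index set of the limit. A character of $G$ lies in ${\rm Tor}(\widehat{G})$ exactly when it has finite order, equivalently when its kernel has finite index, equivalently when it factors through some finite quotient $G/H$; conversely every character of a finite quotient has finite order. Hence $\bigcup_H \widehat{G/H} = {\rm Tor}(\widehat{G})$ under the identification $\chi \leftrightarrow \theta = \chi\circ\pi_H$. Taking the join of the rings from the second step over all finite-index $H$ therefore yields
$$L^{\rm per} = \mathbf{k}\left[\prod_{\tau=0}^{t-1}\hat\alpha_\theta^{(\tau)} : \theta \in {\rm Tor}(\widehat{G})\right],$$
as claimed. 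I expect the only genuinely delicate point to be the direct-limit bookkeeping of the last two paragraphs — verifying that the generator indexed by a torsion character is well defined independently of the finite quotient witnessing that character, and that no extra generators appear when passing from each finite-index $H$ to the full torsion subgroup — all of which rests on the compatibility identity $\widehat{r_H\alpha}_\chi = \hat\alpha_\theta$; the finite Fourier computation itself is routine given Example \ref{ex_cyclic}.
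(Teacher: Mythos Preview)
Your proposal is correct and follows essentially the same route as the paper's own proof: reduce to finite abelian quotients $G/H$ via the Proposition after Lemma~\ref{adjoint}, decouple each quotient equation by the finite Fourier transform into scalar recurrences solved by the hypergeometric products $\prod_{\tau=0}^{t-1}\widehat{r_H\alpha}_\chi^{(\tau)}$, invoke the compatibility $\widehat{r_H\alpha}_\chi = \hat\alpha_{\chi\circ\pi_H}$, and then identify $\bigcup_H \widehat{G/H}$ with ${\rm Tor}(\widehat G)$. The paper packages the compatibility step as the identity $\mathcal F_{G/H}^{-1}\circ\hat\pi_H^*\circ\mathcal F_G = r_H$ coming from a commutative square of Fourier transforms, whereas you phrase it as a consequence of Lemma~\ref{adjoint}; in practice both amount to the same one-line character computation $\hat\alpha_\theta = \sum_{g}\alpha_g\,\chi([g]) = \widehat{r_H\alpha}_\chi$.
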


\begin{proof}
By definition $L^{\rm per} = \bigcup_{H\subseteq G} L^{H-\rm per}$ where $H$ runs over the set
of finite index subgroups of $G$. Let us compute $L^{H-\rm per}$ for any $H\subseteq G$ of 
finite index. The quotient map $\pi\colon G\mapsto G/H$ induces an embedding 
$\hat\pi \colon \widehat{G/H}\hookrightarrow \widehat G$, and $\widehat{G/H}$ is identified
with the finite subgroup of the compact group $\widehat G$ consisting of group morphisms 
$\theta\colon G\to \mathbf S^1$ whose kernel contains $H$. We have the following 
commutative diagram relating the Fourier transforms in $G$ and $G/H$:
$$\xymatrix{
\mathcal C_c(G,\mathbf C) \ar[rr]^-{\mathcal F_G} & &
\mathcal C(\widehat G,\mathbf C) \ar[d]^-{\hat \pi_H^*} \\
\mathcal C(G/H, \mathbf C) \ar[rr]_-{\mathcal F_{G/H}}^-\sim \ar[u]^-{\pi_H^*} 
& &\mathcal C(\widehat{G/H},\mathbf C )
}$$
The map $\hat\pi^*_H$ is simply the restriction of functions.
In the notation of the angle variables $\theta$, we have 
$\hat\pi_H^*(e^{ig\theta}) = e^{i[g]\theta}$.

From lemma \ref{adjoint} we have that the fundamental $H$-periodic
solution $(\phi^H)_g^{(t)}$ is given by:
$$(\phi^H)_g^{(t)} = 
\pi_H^*\mathcal F_{G/H}^{-1}\left(\prod_{\tau=0}^{t-1}\widehat{r_H\alpha}_\theta^{(t)}\right).$$
The map $\pi^*_H$ is the extension of functions from $G/H$ to $G$ and $F_{G/H}$ is a
$\rm Seq(\mathbf C)$-linear isomorphism. Thus,
$$L^{H-\rm per} = \mathbf k
\left[ \prod_{\tau=0}^{t-1}\widehat{r_H\alpha}_\theta^{(\tau)}\colon \theta \in \widehat{G/H} \right].$$
It is elementary to check that $\mathcal F_{G/H}^{-1}\circ \hat \pi_H^* \circ \mathcal F_G = r_H$.
If follows $\widehat{r_H\alpha}_\theta^{(t)} = \widehat{\alpha}^{(t)}_{\hat\pi_H(\theta)}$ and then
$$L^{H-\rm per} = \mathbf k
\left[ \prod_{\tau=0}^{t-1}\widehat{\alpha}_\theta^{(\tau)}\colon H\subseteq \ker(\theta)\right].$$
Now, a representation $\theta\in \widehat G$ is of finite order if and only if its
kernel is of finite index. Thus, we have the stated result.
\end{proof}

\begin{example} Let us consider the cellular automaton of example \ref{example_fs}. The $\sigma$-ring
extension spanned by the fundamental $\mathbf Z_T$-periodic solution is that of example \ref{ex_cyclic}.    The $\sigma$-ring extension spanned by all periodic solutions is:
$$L^{\rm per} = \mathbf k\left[\beta_ \theta^{(t)} \colon \theta/\pi \in \mathbf Q\cap[0,1) \right].$$
In the particular case of example \ref{ex_W90Z} the $\sigma$-extension spanned by all periodic solutions
is:
$$L^{\rm per} = \mathbf k\left[2^t\cos^t(\pi q) \colon q \in \mathbf Q\cap[0,1) \right].$$
\end{example}

\section{Difference Galois theoretic interpretation}

Difference Galois theory deals with algebraic properties of difference equations. In this context,
Picard-Vessiot-Franke (PVF) theory deals with systems of linear difference equations,
\begin{equation}\label{lin_rec}
x^{(t+1)} = A^{(t)}x^{(t)}, \quad A\in {\rm Mat}(d\times d,\mathbf k).
\end{equation}
In order to apply PVF theory we need to consider a $\sigma$-field of coefficients,
and $\sigma$ should be a field automorphism. In order to make the shift operator $\sigma$
into an automorphism the first step we do is to replace the $\sigma$-ring ${\rm Seq}(\mathbf C)$
of sequences by the $\sigma$-ring ${\rm Seq}_\infty(\mathbf C)$ of germs at infinity of sequences.
That means that our considerations will only related to the eventual behaviour of sequences.
\emph{From now on, $\mathbf k$ is a sub-$\sigma$-field of ${\rm Seq}_\infty(\mathbf C)$
containing the complex numbers}. The general PVF theory here summarized is
exposed with detail \cite{Si_etal}, chapter 1 and 2.

\subsection{PVF extensions and Galois correspondence}

The difference Galois theory assigns to each linear difference system a $\sigma$-ring
containing its solutions.

\begin{definition}
A $\sigma$-ring extension $\mathbf k \subset \mathbf L$ is called a PVF extension for
the linear system if it satisfies:
\begin{itemize}
\item[(a)] $\mathbf L$ is a simple $\sigma$-ring.
\item[(b)] The rank over the constants of the space of solutions of \eqref{lin_rec} in $\mathbf L^d$ equal to the rank of the matrix $A$. 
\item[(c)] $\mathbf L$ does not contain any proper $\sigma$-ring satisfying (a) and (b).
\end{itemize}
\end{definition}

\begin{remark} Condition (a) forces the PVF extension to be bigger that the extension 
spanned by the solutions. In order to have a simple $\sigma$-ring we need to add
the inverse of the determinant of a fundamental
matrix of solutions.
\end{remark} 

The PVF extension is unique up to $\sigma$-isomorphism, and it does not have new constants. The group of
$\sigma$-automorphism $\rm Aut_\sigma(\mathbf L/\mathbf k)$ is naturally endowed with a structure
of linear $\mathbf C$-algebraic group. In order to give a Galois correspondence 
we need to replace $\mathbf L$ by its total ring of fractions $\rm q\mathbf L$. The group of
automorphisms ${\rm Aut}_\sigma({\rm q}\mathbf L / \mathbf k)$ coincides with 
${\rm Aut}_\sigma({\rm q}\mathbf L / \mathbf k)$. The following result quotes
Theorem 1.29 and Corollary 1.30 in \cite{Si_etal}.

\begin{theorem}\label{Gal_corr}
Let $\mathfrak F$ denote de set of intermediate $\sigma$-rings $
\mathbf k\subseteq\mathbf F \subseteq \rm q\mathbf L$ such that any non-zero
divisor of $\mathbf F$ is a unit of $\mathbf F$. Let $\mathfrak G$ denote
the set of algebraic subgroups of ${\rm Aut}_\sigma({\rm q}\mathbf L / \mathbf k)$.
\begin{itemize}
\item[(a)] For any $\mathbf F\in\mathfrak F$ the subgroup 
${\rm Aut}_\sigma({\rm q}\mathbf L / \mathbf F)$
of $\sigma$-automorphisms that fix $\mathbf F$ pointwise, is an algebraic subgroup
of  ${\rm Aut}_\sigma({\rm q}\mathbf L / \mathbf k)$.
\item[(b)] For any algebraic subgroup $H\subseteq {\rm Aut}_\sigma({\rm q}\mathbf L / \mathbf k)$
the fixed $\sigma$-ring ${\rm q}\mathbf L^H$ is in $\mathfrak F$.
\item[(c)] Let $\alpha\colon \mathfrak F\to \mathfrak G$ and $\beta\colon \mathfrak G\to\mathfrak L$
denote de maps $\mathbf F\mapsto {\rm Aut}_\sigma({\rm q}\mathbf L / \mathbf F)$ and
$H\mapsto {\rm q}\mathbf L^H= \{a\in  {\rm q}\mathbf L \colon \forall\tau\in H\,\tau(a)=a\}$.  Then $\alpha$ and $\beta$ are each other's inverses.
\item[(d)] The group $H\in\mathfrak G$ is a normal subgroup if and only if 
$\mathbf F^{{\rm Aut}_\sigma(\mathbf F/\mathbf k)} = \mathbf k$ where $\mathbf F$ is $\rm q\mathbf L^H$.
In such case 
${\rm Aut}_\sigma(\mathbf F/\mathbf k)\simeq {\rm Aut}_\sigma({\rm q}\mathbf L/\mathbf k)/H$.
\end{itemize}
\end{theorem}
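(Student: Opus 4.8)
The plan is to obtain this as the $\sigma$-field analog of the classical Galois correspondence, reproducing the architecture of \cite{Si_etal}; since the statement is literally Theorem 1.29 and Corollary 1.30 there, the task is to recall why each clause holds. The structural fact underlying everything is that a PVF ring $\mathbf{L}$, though simple as a $\sigma$-ring and finitely generated over $\mathbf{k}$, need not be a domain: one first shows it decomposes as a finite product $\mathbf{L}\cong\mathbf{L}_0\times\cdots\times\mathbf{L}_{r-1}$ of isomorphic domains that $\sigma$ cyclically permutes. This is exactly what forces the passage to the total ring of fractions $\mathrm{q}\mathbf{L}$ and the condition in $\mathfrak{F}$ that every non-zero-divisor be a unit: the intermediate objects are finite products of fields rather than fields, and it is in this enlarged category that the correspondence is bijective.

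Next I would set up the two pillars. The first is that $G=\mathrm{Aut}_\sigma(\mathrm{q}\mathbf{L}/\mathbf{k})$ is a linear algebraic group over $\mathbf{C}=\mathbf{k}^\sigma$: it embeds into $\mathrm{GL}_d(\mathbf{C})$ via its faithful action on the $d$-dimensional solution space, and its image is Zariski-closed because it is cut out by the polynomial relations satisfied by a fundamental matrix of solutions. The second and deeper pillar is the torsor identity $\mathbf{L}\otimes_{\mathbf{k}}\mathbf{L}\cong\mathbf{L}\otimes_{\mathbf{C}}\mathbf{C}[G]$, which simultaneously fixes the Hopf-algebra structure of $G$ and yields the fundamental invariant computation $(\mathrm{q}\mathbf{L})^G=\mathbf{k}$. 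Applying this last equality not only over $\mathbf{k}$ but to $\mathbf{L}$ viewed as a PVF extension of an arbitrary intermediate $\mathbf{F}\in\mathfrak{F}$ is the engine that drives the correspondence.

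With these in place the four clauses follow. For (a), the stabilizer $\mathrm{Aut}_\sigma(\mathrm{q}\mathbf{L}/\mathbf{F})$ is cut out inside $G$ by the polynomial conditions expressing that an automorphism fixes a finite generating set of $\mathbf{F}$, hence is algebraic. For (b), one checks that $\mathrm{q}\mathbf{L}^H$ lands in $\mathfrak{F}$ by following the product-of-domains structure through the $H$-action. For (c) the two composites are verified separately: $\beta\circ\alpha=\mathrm{id}$ is the invariant engine applied with base $\mathbf{F}$, giving $(\mathrm{q}\mathbf{L})^{\mathrm{Aut}_\sigma(\mathrm{q}\mathbf{L}/\mathbf{F})}=\mathbf{F}$, while $\alpha\circ\beta=\mathrm{id}$ asks that an algebraic subgroup $H$ coincide with the full stabilizer of its own fixed ring. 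Finally (d) is the standard normality dictionary: $H$ is normal in $G$ exactly when $\mathbf{F}=\mathrm{q}\mathbf{L}^H$ is stable under all of $G$, equivalently when $\mathbf{F}$ is itself PVF over $\mathbf{k}$ so that $\mathbf{F}^{\mathrm{Aut}_\sigma(\mathbf{F}/\mathbf{k})}=\mathbf{k}$, and restriction then furnishes $\mathrm{Aut}_\sigma(\mathbf{F}/\mathbf{k})\cong G/H$.

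I expect the genuine obstacle to be the inclusion $\mathrm{Aut}_\sigma(\mathrm{q}\mathbf{L}/\mathrm{q}\mathbf{L}^H)\subseteq H$ in the direction $\alpha\circ\beta=\mathrm{id}$, namely recovering a closed subgroup from its fixed ring. Here the failure of $\mathbf{L}$ to be a domain is felt most sharply and the differential-Galois argument cannot simply be copied: one must descend the torsor identity along $H$ and use that a Zariski-closed subgroup is determined by its ring of invariants, so that no $\sigma$-automorphism outside $H$ can fix $\mathrm{q}\mathbf{L}^H$. Everything else is either formal or a routine transcription of the Picard-Vessiot playbook into the difference setting.
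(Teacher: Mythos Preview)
Your sketch is a faithful outline of the Singer--van der Put argument, but there is nothing in the paper to compare it against: the paper does not prove this theorem at all. It is stated explicitly as a quotation of Theorem~1.29 and Corollary~1.30 in \cite{Si_etal}, with no proof offered. So your proposal is not so much an alternative as a supplement---you have supplied what the paper deliberately omits by citation.
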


\subsection{Splitting}\label{splitting}
Now we will give a geometric construction for the Galois group. In this section we
assume that the matrix of coefficients $A^{(t)}$ in \eqref{lin_rec} is non degenerated. 
Thus, we can take a fundamental matrix of solutions -this can be done by taking
representatives of the sequences in ${\rm Seq}(\mathbf C)$ and the identity
matrix as initial condition- $U\subset{\rm GL}(d,{\rm Seq}_\infty(\mathbf C))$.  
Then the $\sigma$-ring $\mathbf L = \mathbf k[U,U^{-1}]$ 
spanned by the matrix elements of $U$ and its inverse is a PVF extension of $\mathbf k$
for the equation \eqref{lin_rec}. In terms of this matrix $U$ we can construct a splitting
morphism:
$${\rm Split}\colon
\mathbf L \otimes_{\mathbf C}  \mathbf C[\rm{GL}(d,\mathbf C)]  
\to \mathbf L\otimes_{\mathbf k} \mathbf L,
\quad 1\otimes \tau \mapsto 1 \otimes U\tau, \quad U \otimes 1 \mapsto U \otimes 1.$$
The kernel ideal of the splitting morphism turns out to be spanned by its 
restriction $\mathcal G$ to $\mathbf C[\rm{GL}(d,\mathbf C)]$. The variety of $\mathcal G$
is a realization of ${\rm Aut}_\sigma(\mathbf L/\mathbf k)$ as an algebraic subgroup of
${\rm GL}(d,\mathbf C)$. The splitting morphism descends thus to an isomorphism: 
$${\rm split}\colon \mathbf L \otimes_{\mathbf C}
\mathbf C[{\rm Aut}_\sigma(\mathbf L/\mathbf k)] 
 \xrightarrow{\sim} \mathbf L\otimes_{\mathbf k} \mathbf L.$$
This ${\rm split}$ isomorphism maps the Hopf algebra $\mathbf C[{\rm Aut}_\sigma(\mathbf L/\mathbf k)]$
to the ring of constants $(\mathbf L \otimes_{\mathbf k} \mathbf L)^{\sigma}$.

\subsection{Gauge transformations} If we realize a linear change of variables
in the equation \eqref{lin_rec} of the form,
$$y^{(t)} = B^{(t)}x^{(t)}, \quad B\in {\rm GL}(d,\mathbf k),$$ 
we obtain a new linear difference system:
\begin{equation}\label{reduced}
y^{(t+1)} =  C^{(t)}y^{(t)}, \quad C^{(t)} = B^{(t+1)}A^{(t)}B^{(t) -1}.
\end{equation}
It is clear that the PVF extension and Galois groups for \eqref{lin_rec} and \eqref{reduced}
coincide. Thus it is interesting to find the linear change of variables that reduces
the linear difference system to its simplest form. We have the following result:

\begin{theorem}
Let us assume that $H\subset{\rm GL}(d,\mathbf C)$ is an algebraic subgroup such that the matrix
of coefficient $A^{(t)}$ of equation \eqref{lin_rec} is in $H\otimes_{\mathbf C}\mathbf k$. 
Then there is a fundamental
matrix of solutions of  \eqref{lin_rec} in $H\otimes_{\mathbf C} \mathbf L$ 
and the splitting morphism realizes
the Galois group of \eqref{lin_rec} as an algebraic subroup of $H$.
\end{theorem}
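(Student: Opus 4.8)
The plan is to prove this in two stages: first establish the existence of a fundamental matrix of solutions valued in $H \otimes_{\mathbf C} \mathbf L$, and then use the splitting morphism of Section \ref{splitting} to deduce that the Galois group sits inside $H$. The central observation driving the whole argument is that the hypothesis $A^{(t)} \in H \otimes_{\mathbf C} \mathbf k$ means precisely that the recurrence \eqref{lin_rec} can be interpreted as a recurrence \emph{on the group $H$ itself}, so that the shift operator $\sigma$ preserves $H$-valued data. I would make this precise by recalling that $H \otimes_{\mathbf C} \mathbf k$ denotes the $\mathbf k$-points of $H$, i.e. matrices whose entries lie in $\mathbf k$ and which satisfy the defining equations of $H$.

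\medskip

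\noindent First I would construct the fundamental matrix directly by the same device used in Section \ref{splitting}: choose representatives of the entries of $A^{(t)}$ in ${\rm Seq}(\mathbf C)$ and solve the recurrence $U^{(t+1)} = A^{(t)} U^{(t)}$ with initial condition $U^{(0)} = \mathrm{Id}$. The key point is that $H$ is an algebraic \emph{subgroup} of ${\rm GL}(d,\mathbf C)$, hence closed under matrix multiplication; since $A^{(t)} \in H$ for every $t$ and $U^{(0)} = \mathrm{Id} \in H$, an immediate induction shows $U^{(t)} \in H$ for all $t$. Passing to germs at infinity, the entries of $U$ generate the PVF extension $\mathbf L = \mathbf k[U, U^{-1}]$, and by construction the matrix $U$ lies in $H(\mathbf L) = H \otimes_{\mathbf C} \mathbf L$. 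One should note that the inverse $U^{-1}$ is automatically in $H$ as well, since $H$ is a group, which is what makes $\mathbf L$ an honest PVF extension (the inverse of $\det U$ being supplied by the defining data of $H$).

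\medskip

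\noindent With the fundamental matrix inside $H$, I would then run the splitting construction from Section \ref{splitting} but restricted to $H$. Concretely, the splitting morphism sends $1 \otimes \tau \mapsto 1 \otimes U\tau$ and $U \otimes 1 \mapsto U \otimes 1$; because $U$ takes values in $H$ and $\tau$ ranges over $H$, the image $U\tau$ again lands in $H$, so the morphism factors through $\mathbf L \otimes_{\mathbf C} \mathbf C[H]$. The realization of ${\rm Aut}_\sigma(\mathbf L/\mathbf k)$ as the variety of the ideal $\mathcal G$ therefore cuts out a closed subscheme of $H$ rather than of all of ${\rm GL}(d,\mathbf C)$. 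Dualizing, the surjection $\mathbf C[{\rm GL}(d,\mathbf C)] \twoheadrightarrow \mathbf C[H]$ induces a closed immersion ${\rm Aut}_\sigma(\mathbf L/\mathbf k) \hookrightarrow H$ of algebraic groups, which is the assertion that the Galois group is an algebraic subgroup of $H$.

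\medskip

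\noindent \textbf{The main obstacle} I anticipate is not the inductive closure argument, which is essentially formal, but rather verifying cleanly that the splitting morphism descends to $\mathbf C[H]$ in the way claimed — that is, that the kernel ideal of the restricted splitting morphism is still generated by its restriction to $\mathbf C[H]$, so that the variety one obtains really is the Galois group as an $H$-subgroup and not some larger or ill-defined object. Here one must be careful that the translation action $\tau \mapsto U\tau$ is compatible with the Hopf-algebra structure on $\mathbf C[H]$, i.e. that $H$-valuedness of $U$ is what guarantees the right coaction. I expect this to reduce to the observation already recorded at the end of Section \ref{splitting}, namely that ${\rm split}$ identifies $\mathbf C[{\rm Aut}_\sigma(\mathbf L/\mathbf k)]$ with the constants $(\mathbf L \otimes_{\mathbf k} \mathbf L)^\sigma$; restricting the source to $\mathbf C[H]$ and checking that the constants it produces still generate the kernel is the only step requiring genuine (though routine) verification.
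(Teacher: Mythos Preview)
The paper does not prove this theorem: it is stated without proof in the subsection on gauge transformations, as background from Picard--Vessiot--Franke theory (in the spirit of the other results quoted from \cite{Si_etal}). So there is no proof in the paper to compare against.

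Your argument is essentially the standard one and is correct. Two small points are worth tightening. First, since $\mathbf k$ consists of germs at infinity, a chosen representative of $A$ in ${\rm Seq}(\mathbf C)$ need only satisfy the defining equations of $H$ for $t$ sufficiently large; so rather than starting the induction at $U^{(0)}=\mathrm{Id}$, you should start at some $t_0$ with $U^{(t_0)}=\mathrm{Id}$ and $A^{(t)}\in H(\mathbf C)$ for all $t\geq t_0$. This is harmless for germs. Second, the cleanest way to phrase the splitting step is not that the morphism ``factors through $\mathbf C[H]$'' on the source, but that the constant matrix $Z=(U\otimes 1)^{-1}(1\otimes U)$ in $\mathbf L\otimes_{\mathbf k}\mathbf L$ lies in $H(\mathbf L\otimes_{\mathbf k}\mathbf L)$ because both $U\otimes 1$ and $1\otimes U$ do and $H$ is a group; hence every defining equation of $H$ lies in the kernel $\mathcal G$ of the map $\mathbf C[{\rm GL}(d,\mathbf C)]\to (\mathbf L\otimes_{\mathbf k}\mathbf L)^\sigma$, $\tau\mapsto Z$, which is exactly the statement that the Galois group is a closed subgroup of $H$. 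Your ``main obstacle'' then disappears: there is nothing to verify beyond this containment of ideals.
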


Thus, whenever a linear change of variables maps a linear difference system \eqref{lin_rec}
to another \eqref{reduced} in which the matrix of coefficients takes values in a smaller
algebraic subgroup $H\subset {\rm GL}(d,\mathbf C)$, we say that \eqref{reduced} is a 
reduced form of \eqref{lin_rec} to $H$.

\subsection{Cellular automata in finite Cayley graphs} The following results summarizes
what Fourier transforms tell us about the Galois groups of generalized
non-autonomous lineal cellular in finite Cayley graphs.

\begin{theorem}\label{theorem35}
Let $G$ be a finite group, and 
$\alpha \in \mathcal C(G,\mathbf k)$. Let 
$\mathbf k \subseteq \mathbf L$
be the PVF extension of the convolution equation \eqref{conv_eq}. Let
$\widehat G = \{\rho_1,\ldots,\rho_m\}$ where ${\rm rank}(\rho_i) = d_i$. 
The following sentences hold:
\begin{itemize}
\item[(a)] Assume that the map 
$s\mapsto \alpha\star s$ is injective in 
$\mathcal C(G,\mathbf k)$. Then the Fourier transform of \eqref{conv_eq}
is a reduced form to the group 
$\prod_{i = 1}^m {\rm GL}(d_i,\mathbf C).$
\item[(b)] ${\rm dim}_{\mathbf C}({\rm Aut}_\sigma(\mathbf L/\mathbf k)) \leq |G|$.  
\end{itemize}
\end{theorem}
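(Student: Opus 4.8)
The plan is to treat the convolution equation \eqref{conv_eq} as an ordinary linear difference system of rank $|G|$ and to recognise the Fourier transform as a \emph{constant} gauge transformation that block-diagonalises it. First I would fix the identification $\mathcal{C}(G,\mathbf{k})\cong\mathbf{k}^{|G|}$ and rewrite \eqref{conv_eq} as $s^{(t+1)}=M_\alpha^{(t)}s^{(t)}$, where $M_\alpha$ is the matrix of the $\mathbf{k}$-linear operator $s\mapsto\alpha\star s$. Since in the finite case the Fourier transform is a $\mathbf{C}$-linear isomorphism commuting with $\sigma$, it is a gauge transformation by a constant matrix $B\in\mathrm{GL}(|G|,\mathbf{C})\subset\mathrm{GL}(|G|,\mathbf{k})$; by the remarks of the \emph{Gauge transformations} subsection above, such a change of variables leaves both the PVF extension $\mathbf{L}$ and the group $\mathrm{Aut}_\sigma(\mathbf{L}/\mathbf{k})$ unchanged. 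By the Fourier-decoupling proposition above, the transformed coefficient matrix is the block-diagonal $\hat\alpha=\mathrm{diag}(\hat\alpha_{\rho_1},\dots,\hat\alpha_{\rho_m})$ acting on $\bigoplus_{i}\mathrm{Mat}(d_i,\mathbf{k})$, where the $\rho_i$-block acts by left multiplication and is therefore $d_i$ independent copies of the rank-$d_i$ vector system $v^{(t+1)}=\hat\alpha_{\rho_i}^{(t)}v^{(t)}$.

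For part (a), I would use that $\mathbf{k}$ is a field and $\mathcal{C}(G,\mathbf{k})$ is finite-dimensional, so injectivity of $s\mapsto\alpha\star s$ is equivalent to invertibility of $M_\alpha$ over $\mathbf{k}$, hence to invertibility of every block $\hat\alpha_{\rho_i}\in\mathrm{Mat}(d_i,\mathbf{k})$. This says precisely that $\hat\alpha^{(t)}$ takes its values in $H\otimes_{\mathbf{C}}\mathbf{k}$ for $H=\prod_{i=1}^m\mathrm{GL}(d_i,\mathbf{C})$. Applying the reduction theorem of the \emph{Gauge transformations} subsection to this $H$ then produces a fundamental matrix of solutions in $H\otimes_{\mathbf{C}}\mathbf{L}$ and realises the Galois group as an algebraic subgroup of $H$; that is, the Fourier transform of \eqref{conv_eq} is a reduced form to $\prod_{i=1}^m\mathrm{GL}(d_i,\mathbf{C})$.

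For part (b) I would drop the injectivity hypothesis and bound the dimension directly from the decoupled structure. Writing $\mathbf{L}_i$ for the PVF extension of the $\rho_i$-block, each $\mathbf{L}_i$ sits inside $\mathbf{L}$, and every $\sigma$-automorphism of $\mathbf{L}/\mathbf{k}$ preserves each block (the blocks are $\sigma$-stable and defined over $\mathbf{k}$), so restriction yields an injective homomorphism $\mathrm{Aut}_\sigma(\mathbf{L}/\mathbf{k})\hookrightarrow\prod_i\mathrm{Aut}_\sigma(\mathbf{L}_i/\mathbf{k})$, injectivity holding because $\mathbf{L}$ is generated over $\mathbf{k}$ by the $\mathbf{L}_i$. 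Since the $\rho_i$-block is a direct sum of $d_i$ copies of a single vector system, its Galois group coincides with that of one copy, which by condition (b) of the PVF definition acts faithfully on a solution space of rank $r_i=\mathrm{rank}(\hat\alpha_{\rho_i})\le d_i$ and is thus an algebraic subgroup of $\mathrm{GL}(r_i,\mathbf{C})$ of dimension at most $d_i^2$. Summing gives $\dim_{\mathbf{C}}\mathrm{Aut}_\sigma(\mathbf{L}/\mathbf{k})\le\sum_{i=1}^m d_i^2=|G|$, the last equality being the standard identity $\sum_i d_i^2=|G|$ for the irreducible representations of a finite group.

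The step I expect to be the main obstacle is the degenerate case in part (b): when $M_\alpha$ is not invertible the clean splitting construction (which assumes $A^{(t)}$ non-degenerate) does not apply verbatim, so I must argue carefully that each degenerate block still contributes a group acting faithfully on its rank-$r_i$ solution space, and that the passage from the matrix blocks $\mathrm{Mat}(d_i,\mathbf{k})$ to the underlying vector systems — the ``$d_i$ copies'' reduction — does not enlarge the group beyond $\mathrm{GL}(d_i,\mathbf{C})$. Making the restriction homomorphism and its injectivity precise inside the possibly non-reduced ring $\mathbf{L}$ is the only genuinely delicate point; everything else is bookkeeping with the Fourier decoupling and the dimension count.
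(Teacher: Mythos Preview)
Your proposal is correct and follows essentially the same route as the paper: use the Fourier transform as a constant gauge change to block-diagonalise the convolution system, deduce from injectivity that each block $\hat\alpha_{\rho_i}$ is invertible so the system lands in $\prod_i\mathrm{GL}(d_i,\mathbf C)$, and for (b) bound the Galois dimension by $\sum_i d_i^2=|G|$ via the decoupled blocks. Your treatment is more explicit than the paper's---in particular you spell out the restriction homomorphism and flag the degenerate-block issue that the paper dispatches with the phrase ``after reducing them to non degenerate form''---but the underlying argument is the same.
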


\begin{proof}
(a) The Fourier transform of \eqref{conv_eq} consist of $m$ linear difference systems
of rank $d_1,\ldots,d_m$. If the map $s\mapsto \alpha\star s$ is injective then $\hat\alpha^{(t)}_{\rho_i}$
is non degenerated for each $i = 1,\ldots,m$, and thus the Fourier transform of \eqref{conv_eq} is
a reduced form as stated. (b) As before, the Fourier transform consist of $m$ decoupled
linear difference systems. After reducing them to non degenerate form, be obtain systems of 
rang at most $d_1,\ldots,d_n$. Thus, the Galois group has dimension at most $d_1^2+\ldots d_n^2 = |G|$.
\end{proof}

\begin{example}
Let us consider Wolfram 90 cellular automaton \eqref{W90T} in the cyclic group $\mathbf Z_6$.
The Fourier transform in $\mathbf Z_6$ of the equation is,
\begin{equation}\label{P6eq}
\left(\begin{array}{c} \hat s_{0}^{(t+1)} \\ \hat s_{1}^{(t+1)} \\ 
\hat s_{2}^{(t+1)} \\
\hat s_{3}^{(t+1)} \\ \hat s_{4}^{(t+1)} \\ \hat s_{5}^{(t+1)} \end{array} \right)
=
\left(\begin{array}{cccccc} 
2 & 0 & 0 & 0 & 0 & 0 \\
0 & 1 & 0 & 0 & 0 & 0  \\
0 & 0 & -1 & 0 & 0 & 0 \\ 
0 & 0 & 0 & -2 & 0 & 0 \\
0 & 0 & 0 & 0 & -1 & 0 \\
0 & 0 & 0 & 0 & 0 & -1 
\end{array} \right)
\left(\begin{array}{c} \hat s_{0}^{(t)} \\ \hat s_{1}^{(t)} \\ 
\hat s_{2}^{(t)} \\
\hat s_{3}^{(t)} \\ \hat s_{4}^{(t)} \\ \hat s_{5}^{(t)} \end{array} \right)
\end{equation}
Its PVF-extension is $\mathbf C\subset \mathbf C[2^t,(-1)^t]$.
The difference Galois group of \eqref{P6eq} is the group
of diagonal matrices satisfying the same multiplicative resonances that the
diagonal elements of the the matrix of coefficients, 
see \cite{Si_etal} 2.2. Thus:
$${\rm Gal}(\mathbf C,\phi^6) = 
\left\{\left.
\left(
\begin{array}{cccccc} 
\lambda_0 & 0 & 0 & 0 & 0 & 0 \\
0 & 1 & 0 & 0 & 0 & 0  \\
0 & 0 & \lambda_2 & 0 & 0 & 0 \\ 
0 & 0 & 0 & \lambda_3 & 0 & 0 \\
0 & 0 & 0 & 0 & \lambda_4 & 0 \\
0 & 0 & 0 & 0 & 0 & 1 
\end{array} \right)\right| 
\begin{array}{ccc} 
\lambda_0\lambda_3^{-1}&=& 1 \\
\lambda_2^2 &=& 1 \\
\lambda_2\lambda_4^{-1}&=&1 \\ 
\lambda_0^{-1}\lambda_2\lambda_3 &=& 1 
\end{array} 
\right\}
$$
It is not difficult to check that it is isomorphic to $\mathbf C^*\times\mathbf S^1_2$
which is the group of $\sigma$-automorphisms of $\mathbf C[2^t,(-1)^t]$.

\end{example}

\subsection{Cellular automata in infinite Cayley graphs}
Here we show the difference Galois structure of the periodic solutions. 
We consider a discrete infinite group $G$, and $\alpha \in \mathcal C_c(G,\mathbf k)$.

For each $H\lhd G$ of finite index we consider the $\sigma$-extension 
$\mathbf L^{H-\rm per} = \mathbf k[(\phi^H)_g^{(t)}]_{g\in G}$ spanned by the germ
at $t=\infty$ of the $H$-periodic fundamental solution $\phi^H$ of \eqref{conv_eq}.
This extension $\mathbf k\subset \mathbf L^{H-\rm per}$ is in fact a PVF extension
of the finite dimensional linear difference system:
$$\sigma(s) = r_H\alpha \star s$$
defined in $\mathcal C(G/H,\mathbf k)$. The $\sigma$-extension spanned by all periodic
solutions is:
$$\mathbf L^{\rm per} = \lim_{\substack{\longrightarrow \\ H} } \mathbf L^{H-\rm per}, 
\quad\quad \mbox{for } H\lhd G \mbox{ and } |G/H| < \infty.$$

\begin{lemma}
Let $H_1\subset H_2\lhd G$ with $H_1\lhd G$ of finite index. The natural inclusion 
$\mathbf L^{H_2-\rm per} \subseteq \mathbf L^{H_1-\rm per}$ induces a
canonical exact sequence of algebraic groups:
$$\{e\} \to {\rm Aut}_\sigma({\rm q}\mathbf L^{H_1-\rm per}/ \mathbf {\rm q}L^{H_2-\rm per}) \to
{\rm Aut}_\sigma(\mathbf L^{H_1-\rm per}/ \mathbf k) \to
{\rm Aut}_\sigma(\mathbf L^{H_2-\rm per}/ \mathbf k) \to
\{e\}.$$
\end{lemma}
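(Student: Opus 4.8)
The plan is to apply the normality criterion of the Galois correspondence, Theorem~\ref{Gal_corr}(d), to the PVF extension $\mathbf k \subset \mathbf L^{H_1-\rm per}$. The key structural fact I would use is that $\mathbf L^{H_2-\rm per}$ sits inside $\mathbf L^{H_1-\rm per}$ as an intermediate $\sigma$-ring: this inclusion is exactly the natural one coming from the inclusion $\widehat{G/H_2} \hookrightarrow \widehat{G/H_1}$ of character groups (the kernel-containment $H_2 \subseteq \ker\theta$ is more restrictive than $H_1 \subseteq \ker\theta$), as made explicit in the proof of Theorem~\ref{theorem29}. Under the Galois correspondence of Theorem~\ref{Gal_corr}, the intermediate extension $\mathbf L^{H_2-\rm per}$ corresponds to an algebraic subgroup $H \subseteq {\rm Aut}_\sigma({\rm q}\mathbf L^{H_1-\rm per}/\mathbf k)$, and by part (c) this subgroup is precisely ${\rm Aut}_\sigma({\rm q}\mathbf L^{H_1-\rm per}/{\rm q}\mathbf L^{H_2-\rm per})$, the leftmost nontrivial term of the desired sequence.

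First I would verify that $\mathbf L^{H_2-\rm per}$ is itself a PVF extension of $\mathbf k$, namely the PVF extension of the finite-dimensional system $\sigma(s) = r_{H_2}\alpha \star s$ on $\mathcal C(G/H_2,\mathbf k)$, as asserted in the text preceding the lemma. This identifies ${\rm Aut}_\sigma(\mathbf L^{H_2-\rm per}/\mathbf k)$ with the Galois group of that smaller system. Next I would check the normality hypothesis of Theorem~\ref{Gal_corr}(d): one must show that $\mathbf L^{H_2-\rm per}$ is left stable (as a set) by every $\sigma$-automorphism of ${\rm q}\mathbf L^{H_1-\rm per}$ over $\mathbf k$, equivalently that its own fixed-field condition $\mathbf F^{{\rm Aut}_\sigma(\mathbf F/\mathbf k)} = \mathbf k$ holds for $\mathbf F = \mathbf L^{H_2-\rm per}$. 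Because $\mathbf L^{H_2-\rm per}$ is the full PVF extension of a linear system over $\mathbf k$ and a PVF extension has no new constants, this fixed-ring condition is automatic, so the subgroup $H$ is normal. Theorem~\ref{Gal_corr}(d) then yields the quotient isomorphism ${\rm Aut}_\sigma(\mathbf L^{H_2-\rm per}/\mathbf k) \simeq {\rm Aut}_\sigma({\rm q}\mathbf L^{H_1-\rm per}/\mathbf k)/H$, which is exactly the statement that the three-term sequence
$$\{e\} \to {\rm Aut}_\sigma({\rm q}\mathbf L^{H_1-\rm per}/{\rm q}\mathbf L^{H_2-\rm per}) \to {\rm Aut}_\sigma(\mathbf L^{H_1-\rm per}/\mathbf k) \to {\rm Aut}_\sigma(\mathbf L^{H_2-\rm per}/\mathbf k) \to \{e\}$$
is exact, the surjectivity on the right being the content of the quotient identification and the exactness in the middle being the kernel characterization.

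The main obstacle I expect is the careful matching of the various automorphism groups across the passage between $\mathbf L$ and its total ring of fractions ${\rm q}\mathbf L$: Theorem~\ref{Gal_corr} is stated for ${\rm q}\mathbf L$, while the PVF extensions $\mathbf L^{H_i-\rm per}$ themselves appear in the middle and right terms of the sequence. I would lean on the remark in the text that ${\rm Aut}_\sigma({\rm q}\mathbf L/\mathbf k)$ coincides with ${\rm Aut}_\sigma(\mathbf L/\mathbf k)$ to freely pass between a simple $\sigma$-ring and its fraction ring at the level of automorphism groups, so the slightly inconsistent decoration of some terms with ${\rm q}$ and others without is harmless. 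A secondary point to handle cleanly is that the map on Galois groups really is \emph{restriction} of automorphisms: every $\sigma$-automorphism of ${\rm q}\mathbf L^{H_1-\rm per}$ over $\mathbf k$ restricts to $\mathbf L^{H_2-\rm per}$ precisely because the latter is a normal (stable) sub-extension, and this restriction is the surjection whose kernel is the first group in the sequence.
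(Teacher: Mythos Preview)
Your proposal is correct and follows essentially the same route as the paper: identify ${\rm q}\mathbf L^{H_2-\rm per}$ as an intermediate $\sigma$-ring in the PVF extension $\mathbf k \subset \mathbf L^{H_1-\rm per}$, observe that its corresponding subgroup under the Galois correspondence is ${\rm Aut}_\sigma({\rm q}\mathbf L^{H_1-\rm per}/{\rm q}\mathbf L^{H_2-\rm per})$, and then invoke part~(d) of Theorem~\ref{Gal_corr} using the fact that ${\rm q}\mathbf L^{H_2-\rm per}$ is itself (the total ring of fractions of) a PVF extension of $\mathbf k$, so that the fixed-ring condition is automatic. Your additional remarks on the passage between $\mathbf L$ and ${\rm q}\mathbf L$ and on restriction being the surjection are helpful elaborations, though your appeal to Theorem~\ref{theorem29} for the inclusion is specific to the abelian case; for general $G$ the inclusion is simply the observation (cf.\ Proposition~\ref{prop_monog}(c)) that every $H_2$-periodic solution is $H_1$-periodic when $H_1\subset H_2$.
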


\begin{proof}
By the Galois correspondence theorem \ref{Gal_corr} 
${\rm Aut}_\sigma({\rm q}\mathbf L^{H_1-\rm per}/ \mathbf {\rm q}L^{H_2-\rm per})$
is an algebraic subgroup of ${\rm Aut}_\sigma(\mathbf L^{H_1-\rm per}/ \mathbf k)$ 
whose $\sigma$-ring of fixed elements is $\mathbf {\rm q}L^{H_2-\rm per}$ by
definition. Now, $\mathbf {\rm q}L^{H_2-\rm per}$ is the total ring of fractions
of a PVF extension of $\mathbf k$ and thus it satisfies condition (d) in 
theorem \ref{Gal_corr}, which gives us the exact sequence of the statement.
\end{proof}

  Thus, the set:
  $$\{{\rm Aut}_\sigma(\mathbf L^{H-\rm per}/ \mathbf k) \colon  H\lhd G,\, |G/H|\leq \infty\}$$
is a projective system of algebraic groups. We define \emph{the Galois group} of 
$\mathbf L^{\rm per}/\mathbf k$ as the projective limit:
  $${\rm Gal}(\mathbf L^{\rm per}/\mathbf k) =  
  \lim_{\substack{\longleftarrow \\ H} } {\rm Aut}_\sigma(\mathbf L^{H-\rm per}/ \mathbf k), 
\quad\quad \mbox{for } H\lhd G \mbox{ and } |G/H| < \infty.$$

  Each element $\tau\in {\rm Gal}(\mathbf L^{\rm per}/\mathbf k)$ is then a chain of compatible
$\sigma$-ring automorphisms of the extensions $\mathbf k\subseteq \mathbf L^{H-\rm per}$, 
and thus $\tau$ defines a $\sigma$-ring automorphism of the union 
$\mathbf k \subseteq \mathbf L^{\rm per}$. We have thus a natural inclusion 
${\rm Gal}(\mathbf L^{\rm per}/\mathbf k) \subseteq {\rm Aut}_{\sigma}(\mathbf L^{\rm per}/\mathbf k)$.

\begin{proposition}\label{pro_Gal}
${\rm Gal}(\mathbf L^{\rm per}/\mathbf k) = {\rm Aut}_{\sigma}(\mathbf L^{\rm per}/\mathbf k)$.
\end{proposition}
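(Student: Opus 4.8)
The plan is to prove the non-trivial inclusion ${\rm Aut}_\sigma(\mathbf L^{\rm per}/\mathbf k) \subseteq {\rm Gal}(\mathbf L^{\rm per}/\mathbf k)$, since the reverse inclusion is the natural one recorded just above the statement. So I would fix an arbitrary $\tau \in {\rm Aut}_\sigma(\mathbf L^{\rm per}/\mathbf k)$ and show that it arises from a compatible chain of automorphisms of the finite pieces $\mathbf L^{H-\rm per}$. The whole argument reduces to one claim: \emph{each subextension $\mathbf L^{H-\rm per}$ is preserved by $\tau$}, i.e. $\tau$ restricts to an element of ${\rm Aut}_\sigma(\mathbf L^{H-\rm per}/\mathbf k)$. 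Granting this, the restrictions $(\tau|_{\mathbf L^{H-\rm per}})_H$ are automatically compatible with the transition maps of the projective system, because restriction is transitive: for $H_1 \subseteq H_2$ the image of $\tau|_{\mathbf L^{H_1-\rm per}}$ under the surjection of the preceding lemma is exactly $\tau|_{\mathbf L^{H_2-\rm per}}$. Such a compatible family is by definition an element of $\varprojlim_H {\rm Aut}_\sigma(\mathbf L^{H-\rm per}/\mathbf k) = {\rm Gal}(\mathbf L^{\rm per}/\mathbf k)$, and since $\mathbf L^{\rm per} = \bigcup_H \mathbf L^{H-\rm per}$ is a directed union this element acts as $\tau$ on all of $\mathbf L^{\rm per}$. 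Hence $\tau \in {\rm Gal}(\mathbf L^{\rm per}/\mathbf k)$.

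To establish the claim I would first observe that $\tau$ sends $H$-periodic solutions to $H$-periodic solutions. Indeed $\tau$ commutes with $\sigma$ and fixes $\mathbf k$ pointwise, hence fixes the entries of $\alpha$; applying $\tau$ componentwise to the identities $\sigma(s_g) = \sum_{h} \alpha_h s_{h^{-1}g}$ shows that $\tau(s)$ solves \eqref{conv_eq} whenever $s$ does, and applying $\tau$ to the equalities $s_g = s_{gh}$ for $h \in H$ shows that $H$-periodicity is preserved. In particular $\tau(\phi^H)$ is again an $H$-periodic solution, now with entries a priori only in $\mathbf L^{\rm per}$. The crucial point is that these entries in fact lie in $\mathbf L^{H-\rm per}$: the directed union $\mathbf L^{\rm per} = \bigcup_H \mathbf L^{H-\rm per}$ of PVF extensions (directed because $H_1 \cap H_2$ is again normal of finite index, with both $\mathbf L^{H_i-\rm per}$ contained in $\mathbf L^{H_1\cap H_2-\rm per}$) introduces no new constants, so its field of constants is still $\mathbf C$; by the PVF rank condition (b) the $\mathbf C$-space of $H$-periodic solutions already attains its full dimension $|G/H|$ inside $\mathbf L^{H-\rm per}$, and therefore no extra $H$-periodic solution can appear in the larger ring $\mathbf L^{\rm per}$. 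Consequently $\tau(\phi^H)$ has all its entries in $\mathbf L^{H-\rm per}$, giving $\tau(\mathbf L^{H-\rm per}) \subseteq \mathbf L^{H-\rm per}$ because $\mathbf L^{H-\rm per}$ is generated over $\mathbf k$ by the entries of $\phi^H$. Running the same argument for $\tau^{-1}$ yields the reverse inclusion, so $\tau$ restricts to a $\sigma$-automorphism of $\mathbf L^{H-\rm per}$ over $\mathbf k$.

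I expect the main obstacle to be precisely this last step, namely showing that $\mathbf L^{H-\rm per}$ is a \emph{characteristic} subextension. The delicate ingredient is the no-new-constants property of the infinite directed union: without it, the larger ring $\mathbf L^{\rm per}$ could in principle contain additional $H$-periodic solutions not defined over $\mathbf L^{H-\rm per}$, and $\tau$ might then carry the generators outside $\mathbf L^{H-\rm per}$. Once the constant field is pinned down to $\mathbf C$, the PVF rank condition (b) closes the gap, and everything that remains (compatibility of the restrictions, passage to the projective limit, and the identification of the limit automorphism with $\tau$) is formal bookkeeping about directed unions and projective limits.
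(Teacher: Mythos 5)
Your proof is correct and follows essentially the same route as the paper: reduce to showing that each $\mathbf L^{H-\rm per}$ is stable under $\tau$, using that $\tau$ sends the fundamental $H$-periodic solution to another $H$-periodic solution which must already lie in $\mathbf L^{H-\rm per}$, and then pass to the projective limit by formal compatibility of restrictions. The only (cosmetic) difference is that the paper first extends $\tau$ to the total ring of fractions ${\rm q}\mathbf L^{H-\rm per}$ and gets equality from the non-degeneracy of $\tau\psi$ as a fundamental system, whereas you make the no-new-constants and dimension-count argument explicit and obtain equality by running the same argument for $\tau^{-1}$.
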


\begin{proof}
We have to see that all automorphisms of $\mathbf L^{\rm per}/\mathbf k$ come from a compatible
family of automorphisms $\{\tau_H\}$ with $\tau_H\in {\rm Aut}_{\sigma}(\mathbf L^{H-\rm per}/\mathbf k)$.
In other words, we have to check that for each $H\lhd G$ of finite index $\tau(\mathbf L^{H-\rm per}) = 
\mathbf L^{H-\rm per}$. First we extend $\tau$ to an automorphism of the total ring of
fractions. Second, let $\psi\in\mathcal C(G,\mathbf L)$ be the germ at infinity of a fundamental 
$H$-periodic solution of equation
\eqref{conv_eq} - we may have to take some representatives of the coefficients -.
Then ${\rm q}\mathbf L^{H-\rm per} = {\rm q}\mathbf k[\psi^{(t)}_g\colon g\in G]$.
Then, since $\tau$ is a $\sigma$-ring automorphism, $\tau\psi$ is also 
$H$-periodic solution of 
\eqref{conv_eq} - satisfying the same no degeneracy conditions in order to span the whole
space of solutions - and thus 
$\tau({\rm q}\mathbf L^{H-\rm per}) = $ 
${\rm q}\mathbf k[\tau \psi^{(t)}_g\colon g\in G]= {\rm q}L^{\mathbf H-\rm per}$.
\end{proof}

The group ${\rm Gal}(\mathbf L^{\rm per}/\mathbf k)$ is an affine pro-algebraic group. Its ring
of regular functions is the direct limit of the rings of its algebraic quotients:
$$\mathbf C[{\rm Gal}(\mathbf L^{\rm per}/\mathbf k)] =  
  \lim_{\substack{\longrightarrow \\ H} } \mathbf C[{\rm Aut}_\sigma(\mathbf L^{H-\rm per}/ \mathbf k)], 
\quad\quad \mbox{for } H\lhd G \mbox{ and } |G/H| < \infty.$$

It is clear that $\mathbf k\subseteq \mathbf L^{\rm per}$ is not a PVF extension. There are some 
broader extensions of Galois in which this particular case fits. An exposition of a very 
general theory that applies to our case is given in \cite{Am_etal}.

\begin{theorem}\label{theorem39}
The $\sigma$-ring extension $\mathbf k\subseteq\mathbf L^{\rm per}$ is a Hopf-Galois in the
following sense. There is a $\sigma$-ring isomorphism:
$${\rm split}\colon \mathbf L^{\rm per} \otimes_{\mathbf C}
\mathbf C[{\rm Gal}(\mathbf L^{\rm per}/\mathbf k)] 
 \xrightarrow{\sim} \mathbf L^{\rm per}\otimes_{\mathbf k} \mathbf L^{\rm per}.$$
that identifies $\mathbf C[{\rm Gal}(\mathbf L^{\rm per}/\mathbf k)]$ with the ring of constants
$(\mathbf L^{\rm per}\otimes_{\mathbf k}\mathbf L^{\rm per})^\sigma$.   
\end{theorem}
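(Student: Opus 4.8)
The plan is to obtain the desired splitting isomorphism by passing to the projective/direct limit of the finite-level splitting isomorphisms already provided by the standard PVF theory (Section \ref{splitting}), and then checking that the limit construction is compatible with the algebraic structures involved. First I would recall that for each normal subgroup $H\lhd G$ of finite index, the extension $\mathbf k\subseteq\mathbf L^{H-\rm per}$ is a genuine PVF extension of the finite-dimensional system $\sigma(s)=r_H\alpha\star s$ in $\mathcal C(G/H,\mathbf k)$, so the splitting construction of Section \ref{splitting} applies verbatim and yields a $\sigma$-ring isomorphism
$${\rm split}_H\colon \mathbf L^{H-\rm per}\otimes_{\mathbf C}\mathbf C[{\rm Aut}_\sigma(\mathbf L^{H-\rm per}/\mathbf k)]\xrightarrow{\sim}\mathbf L^{H-\rm per}\otimes_{\mathbf k}\mathbf L^{H-\rm per},$$
identifying $\mathbf C[{\rm Aut}_\sigma(\mathbf L^{H-\rm per}/\mathbf k)]$ with the constants $(\mathbf L^{H-\rm per}\otimes_{\mathbf k}\mathbf L^{H-\rm per})^\sigma$.

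Next I would verify that these finite-level isomorphisms form a compatible family under the transition maps of the two limit systems. The inclusions $\mathbf L^{H_2-\rm per}\subseteq\mathbf L^{H_1-\rm per}$ for $H_1\subseteq H_2$ induce, on the one hand, the surjections ${\rm Aut}_\sigma(\mathbf L^{H_1-\rm per}/\mathbf k)\twoheadrightarrow{\rm Aut}_\sigma(\mathbf L^{H_2-\rm per}/\mathbf k)$ of the preceding lemma — hence dual inclusions of the coordinate rings $\mathbf C[{\rm Aut}_\sigma(\mathbf L^{H_2-\rm per}/\mathbf k)]\hookrightarrow\mathbf C[{\rm Aut}_\sigma(\mathbf L^{H_1-\rm per}/\mathbf k)]$ — and on the other hand the obvious inclusions of the tensor products. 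The key point is that ${\rm split}_{H_1}$ restricts to ${\rm split}_{H_2}$ on the appropriate subalgebras; this follows because the splitting morphism is defined functorially from a fundamental matrix of solutions, and a fundamental matrix for the $H_2$-level system embeds (via $\pi_{H_2}^*$ composed with the refinement $G/H_1\to G/H_2$) as a sub-block of a fundamental matrix at the $H_1$-level. With compatibility in hand, I would take the direct limit over $H$: since $\mathbf L^{\rm per}=\varinjlim_H\mathbf L^{H-\rm per}$ and $\mathbf C[{\rm Gal}(\mathbf L^{\rm per}/\mathbf k)]=\varinjlim_H\mathbf C[{\rm Aut}_\sigma(\mathbf L^{H-\rm per}/\mathbf k)]$ by the definitions given, and since tensor products commute with filtered colimits, the colimit of the ${\rm split}_H$ yields the required isomorphism.

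The main obstacle I expect is controlling the constants after passing to the colimit, i.e. proving that
$$(\mathbf L^{\rm per}\otimes_{\mathbf k}\mathbf L^{\rm per})^\sigma=\varinjlim_H(\mathbf L^{H-\rm per}\otimes_{\mathbf k}\mathbf L^{H-\rm per})^\sigma.$$
Taking $\sigma$-invariants is a left-exact operation and does not automatically commute with a filtered colimit, so I would need to argue this by hand. The natural way is to show that any constant element of $\mathbf L^{\rm per}\otimes_{\mathbf k}\mathbf L^{\rm per}$ already lies in some finite stage $\mathbf L^{H-\rm per}\otimes_{\mathbf k}\mathbf L^{H-\rm per}$: an element of the colimit is supported on finitely many generators, each of which lives at some finite level $H_i$, and one can take $H=\bigcap_i H_i$, again of finite index and normal, so that the element and its $\sigma$-image both lie in the $H$-stage. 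Being a constant there, it corresponds under ${\rm split}_H$ to an element of $\mathbf C[{\rm Aut}_\sigma(\mathbf L^{H-\rm per}/\mathbf k)]$, whence to an element of $\mathbf C[{\rm Gal}(\mathbf L^{\rm per}/\mathbf k)]$. I would also need the subsidiary fact that $\mathbf L^{\rm per}$ has no new constants beyond $\mathbf C$, which follows from the corresponding property at each finite PVF level together with the filtered-colimit argument. Finally, Proposition \ref{pro_Gal} guarantees that ${\rm Gal}(\mathbf L^{\rm per}/\mathbf k)$ is the full automorphism group, so the pro-algebraic group appearing in the splitting is the honest Galois group, completing the identification claimed in the Hopf-Galois statement.
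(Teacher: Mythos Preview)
Your proposal is correct and follows essentially the same route as the paper: assemble the finite-level PVF splitting isomorphisms ${\rm split}_H$ into a compatible directed system, pass to the direct limit, and use that tensor products commute with filtered colimits. Your treatment of the constants step is in fact more careful than the paper's, which simply asserts that the constants of the direct limit coincide with the direct limit of the constants by appeal to Proposition~\ref{pro_Gal}; your finite-support argument makes this explicit.
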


\begin{proof}
Let $H$ and $F$ be normal subgroups of $G$ of finite index, with $H \subset F$
we have a projection $\pi_{H,F}\colon {\rm Aut}_\sigma(\mathbf L^{H-\rm per}/\mathbf k)
\to {\rm Aut}_\sigma(\mathbf L^{F-\rm per}/\mathbf k)$, and thus an embedding
of Hopf algebras:
$$\pi_{H,F}^*\colon \mathbf C[{\rm Aut}_\sigma(\mathbf L^{F-\rm per}/\mathbf k)]
\hookrightarrow \mathbf C[{\rm Aut}_\sigma(\mathbf L^{H-\rm per}/\mathbf k)],$$
given by the Galois correspondence. This construction is compatible with that of
the splitting morphisms of subsection \ref{splitting}, thus we have a commutative
diagram:
$$\xymatrix{
 \mathbf L^{H-\rm per} \otimes_{\mathbf C} \mathbf C[{\rm Aut}_\sigma(\mathbf L^{H-\rm per}/\mathbf k)] 
 \ar[rr]_-{\sim}^-{{\rm split}_H}
 & & 
 \mathbf L^{H-\rm per}\otimes_{\mathbf k} \mathbf L^{H-\rm per} \\
 \mathbf L^{F-\rm per} \otimes_{\mathbf C} \mathbf C[{\rm Aut}_\sigma(\mathbf L^{F-\rm per}/\mathbf k)]
  \ar[rr]_-{\sim}^-{{\rm split}_F} \ar[u]^-{i\otimes \pi^{*}_{H,F}}
 & &
 \mathbf L^{F-\rm per}\otimes_{\mathbf k} \mathbf L^{F-\rm per} \ar[u]^-{i\otimes i}
}$$
Thus we have a directed system of isomorphism, that induces an isomorphism between the direct
limits. The direct limit commutes with tensor products, so that, this is the isomorphism of
the statement. Finally, from proposition \ref{pro_Gal} we have that
the constants of the direct limit are also the direct limit of the constants.
\end{proof}

\begin{example}
Let us assume that the group $G$ is abelian. Then the construction of the Galois group 
${\rm Gal}(\mathbf L^{\rm per}/\mathbf k)$ is rather simpler. The $\sigma$-ring is
constructed by adjoining to $\mathbf k$ - in a compatible way - the solutions
of all the difference equations:
$$\hat s^{t+1}_{\theta} = \hat\alpha_{\theta}^{(t)}\hat s^{(t)}_{\theta},
\quad \theta\in{\rm Tor}(\widehat G)$$ 
Taking a representative of $\alpha$ in $\mathcal C(G,{\rm Seq}(\mathbf C)$ we solve
those difference equations by means of $\mathbf  k$-hypergeometric sequences whose
germs at $t=\infty$ we denote by $\psi_\theta^{(t)}$. Then,
 $$\mathbf L^{\rm per} = \mathbf k\left[\psi_{\theta}^{(t)}, \frac{1}{\psi_{\theta}^{(t)}}
 \colon
 \theta\in {\rm Tor}(\widehat G) \mbox{ and } \hat\alpha_{\theta}\neq 0
 \right]$$
Here, the $\mathbf  k$-hypergeometric $\psi^{(t)}_\theta$ with $\hat\alpha_\theta = 0$
are eventually zero and thus not included. Now we can see the Galois group as a 
pro-algebraic subgroup of the infinite torus:
$$\mathbf T_G = \prod_{\theta\in {\rm Tor}(\widehat G)} \mathbf C^*_\theta$$
consisting the product of a copy of a multiplicative group $\mathbf C^*$ for each element
of ${\rm Tor}(\widehat G)$. For a given element $\lambda\in\mathbf T_G$ we denote
$\lambda_\theta$ its $\theta$-th component. Thus:
$$\lambda(\psi_\theta^{(t)}) = \lambda_\theta \psi_{\theta}^{(t)}.$$
The equations of ${\rm Gal}(\mathbf L^{\rm per}/\mathbf k)$ inside $\mathbf T_G$ are then:
$$\lambda_{\theta} = 1 \mbox{ for any }\theta\mbox{ with }\hat\alpha_\theta = 0,$$
and the rest of equations come from the finite dimensional diagonal equations that appear
for ech $H$ of finite order in $G$. Thus, following \cite{Si_etal} it appears an equation of the
form:
$$\lambda_{\theta_1}^{m_1}\cdot\ldots\cdot \lambda_{\theta_k}^{m_k} = 1$$
if there exist a $k$-tuple $(f_1^{(t)}\ldots,f_k^{(t)})\in\mathbf k^k$ with
$$\left(\frac{f_1^{(t+1)}\hat\alpha_{\theta_1}}{f_1^{(t)}}\right)^{m_1}\cdot \ldots\cdot 
\left(\frac{f_k^{(t+1)}\hat\alpha_{\theta_k}}{f_k^{(t)}}\right)^{m_k} = 1.
$$
\end{example}

\begin{example}
Let us consider Wolfram 90 cellular automaton \eqref{W90Z} where $\mathbf C$ stands
for the base $\sigma$-field. The PVF extension of periodic solutions is: 

$$\mathbf C\subset \mathbf L^{\rm per} = \mathbf C \left[2^t\cos^t(2\pi q), 2^{-t}\cos^{-t}(2\pi q):
q\in \mathbf Q\cap \left[0,{1}\right) \mbox{ and } q \neq \frac{1}{4},\frac{3}{4} \right]$$
Let us discuss the Galois group associated to this extension. We consider the Fourier
transform of \eqref{W90Z} obtaining the family of diagonal equations:
$$\hat s^{(t+1)}_q = 2\cos(2\pi q) \hat s_{q},\quad\quad q\in \mathbf Q\cap[0,1).$$
We consider the embedding of the Galois group in the infinite torus: 
$${\rm Gal}_\sigma (\mathbf L^{\rm per}/\mathbf C) \subset  \prod_{q\in \mathbf Q\cap[0,1)}\mathbf C^{*}.$$
A tuple $(\lambda_q)_{q\in \mathbf Q\cap[0,1)}$ of the infinite torus 
is in the Galois group if and only if it
satisfies $\lambda_q=1$ for the zeroes of $\cos(2\pi q)$ and equations:
$$\lambda_{q_1}^{m_1}\cdot\ldots\cdot\lambda_{q_r}^{m_r} =1,$$
for all resonances of the form:
$$2^{m_1+\ldots+m_r}\cos^{m_1}(2\pi q_1)\cdot\ldots\cdot\cos^{m_r}(2\pi q_r) = 1.$$
Some of these equations, of low degree, can be computed explicitely:
$$\lambda{\frac{1}{4}}=1, \quad \lambda_{\frac{1}{6}}=1, \quad \lambda_{\frac{1}{3}}^2=1,$$
$$\lambda_q\lambda_{1-q}^{-1}=1, \quad\lambda_{\frac{1}{3}}\lambda_q\lambda^{-1}_{q+\frac{1}{2}} = 1,\quad
\mbox{ for any } 
q\in \left[0,\frac{1}{2}\right)\cap \mathbf Q.$$
\end{example}

\begin{remark}
In this article we succeed in studying the difference Galois structure of the $\sigma$-ring
spanned by the periodic solutions of a cellular automata. It would be desirable to apply
the difference Galois theory to the $\sigma$-ring spanned by finite support solutions.
In the abelian case, the ring of Fourier polynomials is endowed of a differential structure
that allows to apply parameterized difference Galois theory (see \cite{Ha_etal} for the
general theory and \cite{Wibmer} to a suitable extension).
\end{remark}

\section*{Acknoledgements}

We would like to thank to G. Casale, M. F. Singer and M. Wibmer for their comments and 
discussions on the subject of the article. 
The authors received partial support from the projects MTM2012-31714 from Spanish goverment, 
ECOS Nord France-Colombia No C12M01, and their hosting institutions 
Universidad Sergio Arboleda and Universidad Nacional de Colombia.


\begin{thebibliography}{99}

\bibitem{Am_etal}{\sc Katsutoshi Amano, Akira Masuoka.}
{\it Picard-Vessiot extensions of artinian simple module algebras.}  
Journal of Algebra (2005) 743--767.

\bibitem{Ce_etal}{\sc Tullio Ceccherini, Michael Coornaert.}
{\it Celullar automata and Groups.}
Springer Monographs in Mathematics, Springer 2010.

\bibitem{Go_etal}{\sc Martin Golubitsky, Ian Stewart.}
{\it The Symmetry Perspective.} Springer, 2003

\bibitem{Ha_etal}{\sc Charlotte Hardouin, Michael F. Singer}
{\it Differential Galois theory of linear difference equations}
Math. Ann (2008) 342, 333--377.

\bibitem{Um_etal}{\sc Suji Morikawa, Katsunari Saito, Taihei Takeuchi, Hiroshi Umemura.}
{\it Discrete Burgers' equation, Binomial coefficients and Mandala.}  
Mathematics in Computer Science
September 2010, Volume 4, Issue 2--3, pp 151--167.

\bibitem{Si_etal}{\sc Michael F. Singer, Marius Van der Put.}
{\it Galois theory of Difference Equations.}
Lecture Notes in Mathematics 1666, Springer 1997.

\bibitem{Rudin} {\sc Walter Rudin.}
{\it Fourier Analysis on Groups.}
Wiley 1990. 

\bibitem{Terras}{\sc Audrey Terras}
{\it Fourier Analysis of Finite Groups and Applications.}
London Mathematical Society Student Texts 43, 1999.

\bibitem{Voorhees}{\sc Burton H. Voorhees}
{\it Computational Analysis of One-dimensional Cellular Automata.} 
W.S. Series in Nonlinear Science, 
World Scientific (1996)

\bibitem{Wibmer}{\sc Michael Wibmer,}
{\it Existence of $\partial$-parameterized Picard-Vessiot extension
over fields with algebraically closed constants.}
Journal of Algebra, Vol. 361 (2012), 163--171.

\end{thebibliography}
\end{document}